\newtheorem{thm}{Theorem}[section]
\newtheorem{prop}[thm]{Proposition}
\newtheorem{lem}[thm]{Lemma}
\newtheorem{cor}[thm]{Corollary}
\theoremstyle{definition}
\newtheorem{defn}[thm]{Definition}
\theoremstyle{remark}
\numberwithin{equation}{section}
\DeclareMathOperator{\tr}{Tr}
\DeclareMathOperator{\SF}{sf}
\DeclareMathOperator{\ind}{Ind}
\DeclareMathOperator{\ch}{ch}
\DeclareMathOperator{\cok}{coker}
\DeclareMathOperator{\End}{End}
\DeclareMathOperator{\SL}{SL}
\DeclareMathOperator{\SO}{SO}
\DeclareMathOperator{\Spin}{Spin}
\newcommand{\CC}{\mathbb{C}}
\newcommand{\be}{\begin{equation}}
\newcommand{\ee}{\end{equation}}
\title{On the Witten Rigidity Theorem for \\ Odd Dimensional Manifolds}
\author{Fei HAN\footnote{Department of Mathematics, National University of Singapore,
Block S17, 10 Lower Kent Ridge Road, Singapore 119076. (mathanf@nus.edu.sg)},
\
Jianqing YU\footnote{School of Mathematical Sciences,
University of Science and Technology of China,
96 Jinzhai Road, Hefei, Anhui 230026,
P. R. China. (jianqing@ustc.edu.cn)}}
\date{}
\begin{document}

\maketitle

\begin{abstract}
We establish several Witten type rigidity and vanishing theorems for twisted Toeplitz operators on
odd dimensional manifolds. We obtain our results by combining the modular method, modular transgression and some careful analysis of odd Chern classes for cocycles in odd $K$-theory. Moreover we discover that in odd dimensions, the fundamental group of manifolds plays an important role in the rigidity.
\end{abstract}

\tableofcontents

\section{Introduction}\label{sec1}

Let $M$ be a closed smooth manifold and
$P$ be a Fredholm operator on $M$.
We assume that a compact connected Lie group $G$ acts on $M$ nontrivially and
that $P$ is $G$-equivariant, by which we mean it commutes with the $G$ action.
Then the kernel and cokernel of $P$ are finite dimensional representations of $G$.
The equivariant index of $P$ is the virtual character of $G$ defined by
\begin{equation}
\ind(h,P)=\tr\big[h\big|_{\ker P}\big]-\tr\big[h\big|_{\cok P}\big],\quad h\in G.
\end{equation}

$P$ is said to be \emph{rigid} for this $G$ action if $\ind(h,P)$ does not
depend on $h\in G$. Furthermore, we say that $P$ has \emph{vanishing property} if $\ind(h,P)$ is identically zero.
To study rigidity and vanishing properties of Fredholm operators,
we only need to restrict to the case when $G=S^1$.

It is well known that classical operators: the signature operator for oriented manifolds, the
Dolbeault operator for almost complex manifolds and the Dirac operator for spin manifolds are
rigid \cite{MR0278334}. In \cite{MR970288}, Witten derived a series of
twisted Dirac operators on the free loop space $LM$ of a spin manifold $M$.
The elliptic genus constructed by Landweber-Stong \cite{MR948178}
and Ochanine \cite{MR895567} in a topological way
turns out to be the index of one of these operators.
Motivated by physics, Witten conjectured that these elliptic operators should be rigid.
In particular, as a highly nontrivial consequence, the twisted operator
\begin{equation*}
D\otimes T_\CC M
\end{equation*}
which is known as the {\it Rarita-Schwinger operator} \cite{MR885560} is rigid. We
refer to \cite{MR970281} for a brief early history of the subject.

The Witten conjecture was first proved by Taubes \cite{MR998662} and Bott-Taubes \cite{MR954493}.
Hirzebruch \cite{MR981372} and Krichever \cite{MR1048541} proved Witten's conjecture for almost complex manifold case.
In \cite{MR1331972,MR1396769}, using the modular invariance property, Liu
presented a simple and unified proof as well as various vast generalizations of the Witten conjecture.
Several new vanishing theorems were also established in \cite{MR1331972,MR1396769}. In \cite{MR1722036}, Dessai established the rigidity and vanishing theorems for spin$^c$ case.
Liu-Ma \cite{MR1756105,MR1969037} and Liu-Ma-Zhang \cite{MR1870666,MR2016198}
generalized the rigidity and vanishing theorems to the family case
 on the levels of equivariant Chern character and of equivariant $K$-theory.
However, since Dirac operators on odd dimensional manifolds are self ajoint and therefore have index zero,
the rigidity and vanishing properties for twisted Dirac operators
make sense only for even dimensional manifolds.

Now let $M$ be an odd dimensional closed smooth spin Riemannian manifold.
The appropriate index to consider on $M$ is that of twisted Toeplitz operators (\cite{MR679698,MR1231957}),
which gives the natural pairing between odd $K$-homology and odd $K$-group.
Thus it fits with the interpretation of the index of twisted Dirac operator
on even dimensional manifolds as a pairing between even  $K$-homology and even $K$-group.
An element of $K^{-1}(M)$ can be represented by a differentiable map from $M$ into the general linear group
\begin{equation*}
g:M\longrightarrow {\rm GL}(N,\mathbb{C}),
\end{equation*}
where $N$ is a positive integer. We recall the definition of Toeplitz operators as follows.

Let $\Delta(TM)$ be the Hermitian bundle of spinors and $\mathcal{E}$ be a complex
Hermitian vector bundle carrying a Hermitian connection $\nabla^{\mathcal{E}}$ over $M$.
The twisted Dirac operator $D\otimes \mathcal{E}$
induces a splitting of $L^2(M,\Delta(TM)\otimes \mathcal{E})$, the $L^2$-completion of
the space $\Gamma(M,\Delta(TM)\otimes \mathcal{E})$ of smooth sections of $\Delta(TM)\otimes \mathcal{E}$ over $M$, into an
orthogonal direct sum as
\begin{equation*}
L^2(M,\Delta(TM)\otimes \mathcal{E})=\bigoplus_{\lambda\in\,{\rm Spec}\,(D\otimes \mathcal{E})} \mathcal{E}_{\lambda}\ ,
\end{equation*}
where $\mathcal{E}_{\lambda}$ is the eigenspace associated to the eigenvalue $\lambda$ of $D\otimes \mathcal{E}$. Set
\begin{equation*}
L_+^2(M,\Delta(TM)\otimes \mathcal{E})=\bigoplus_{\lambda\geq 0}\mathcal{E}_{\lambda},
\end{equation*}
and denote by $P_+$ the orthogonal projection from $L^2(M,\Delta(TM)\otimes \mathcal{E})$ to $L_+^2(M,\Delta(TM)\otimes \mathcal{E})$.

Now consider the trivial vector bundle $\mathbb{C}^N|_M$ of rank $N$ over $M$. We equip $\mathbb{C}^N|_M$ with the canonical
trivial metric and connection.
Then $P_+$ extends naturally to an orthogonal projection
\begin{equation*}
L^2(M,\Delta(TM)\otimes \mathcal{E}\otimes \mathbb{C}^N|_{M})\longrightarrow L_+^2(M,\Delta(TM)\otimes \mathcal{E}\otimes \mathbb{C}^N|_{M})
\end{equation*}
by acting as identity on $\mathbb{C}^N|_{M}$.
We still denote this extension by $P_+$.

The map $g$ can be interpreted as an automorphism of $\mathbb{C}^N|_M$. Moreover,
$g$ extends naturally to a bounded map from
\begin{equation*}
L^2(M,\Delta(TM)\otimes \mathcal{E}\otimes \mathbb{C}^N|_{M})
\end{equation*}
to itself by acting as identity on $L^2(M,\Delta(TM)\otimes \mathcal{E})$. We still denote this extended map by $g$.

With the above data given, the twisted Toeplitz operator
associated to $D\otimes \mathcal{E}$ and $g$ can be defined as
\begin{align}
&\hspace{2em}\mathcal{T}\otimes \mathcal{E}\otimes (\mathbb{C}^N|_{M},g):=P_+gP_+:
\\
&L_+^2\big(M,\Delta(TM)\otimes \mathcal{E}\otimes \mathbb{C}^N|_{M}\big)
\longrightarrow L_+^2\big(M,\Delta(TM)\otimes \mathcal{E}\otimes \mathbb{C}^N|_{M}\big).\notag
\end{align}

The important fact is that $\mathcal{T}\otimes \mathcal{E}\otimes (\mathbb{C}^N|_{M},g)$ is a Fredholm operator.
Moreover, we can compute its index as follows (see \cite{MR679698,MR1231957}),
\begin{equation}
\begin{split}\label{35}
&\ind\big(\mathcal{T}\otimes \mathcal{E}\otimes (\mathbb{C}^N|_{M},g)\big)
\\
&=-\Big\langle
\widehat{A}(TM,\nabla^{TM})
\ch(\mathcal{E},\nabla^{\mathcal{E}})
\ch(\mathbb{C}^N|_M,g,d),\,[M]\Big\rangle,
\end{split}
\end{equation}
where $[M]$ denotes the fundamental class of $M$. See Section \ref{sec2.1} for the relevant characteristic forms.

In \cite{MR2532715}, Liu-Wang for the first time study the rigidity and vanishing properties of Toeplitz operators.
They obtained a very interesting result \cite[Theorem 2.4]{MR2532715} which states that
$$\mathcal{T}\otimes (\mathbb{C}^N|_{M},g)$$
is rigid and this can be thought of as an odd analogue of Atiyah-Hirzebruch theorem \cite{MR0278334}. Furthermore,
they established the rigidity for twisted Toeplitz operators associated to the Witten bundles by Liu's approach \cite{MR1331972,MR1396769} under the assumption that the fixed point sets of the group action are $1$-dimensional.

The purpose of the present paper is to extend their rigidity and vanishing properties
for twisted Toeplitz operators to the cases of fixed points of general dimensions. Let
\begin{align} \Theta_{2}(TM)&=\bigotimes_{n=1}^\infty S_{q^n}(T_\mathbb{C}M)\otimes
\bigotimes_{n=1}^{\infty}\Lambda_{-q^{n-{1/2}}}(T_\CC M),
\\
\Theta_{3}(TM)&=\bigotimes_{n=1}^\infty S_{q^n}(T_\mathbb{C}M)\otimes
\bigotimes_{n=1}^{\infty}\Lambda_{q^{n-{1/2}}}(T_\CC M).\end{align}
We obtain the following result (see the proof after Corollary \ref{V=TM}).
\begin{thm}\label{main}
Let $M$ be an odd dimensional smooth closed spin Riemannian manifold with a nontrivial circle action.
Let $g:M\longrightarrow \SO(N)$ be an $S^1$-invariant cocycle in the odd real $K$-theory of $M$.
Suppose $M$ is simply connected and $H^3(M, \mathbb{R})=0$.  Then the {\it Toeplitz-Witten operators}
\begin{equation*}
\mathcal{T}\otimes \Theta_2(TM)\otimes (\mathbb{C}^N|_{M},g),\quad
\mathcal{T}\otimes \Theta_3(TM)\otimes (\mathbb{C}^N|_{M},g)
\end{equation*}
are rigid. In particular, the {\it Toeplitz-Rarita-Schwinger operator}
$$\mathcal{T}\otimes T_\CC M \otimes (\mathbb{C}^N|_{M},g) $$
is rigid.
\end{thm}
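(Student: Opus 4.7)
The plan is to follow the modular invariance approach pioneered by Liu \cite{MR1331972,MR1396769} and extended to the odd-dimensional Toeplitz setting by Liu-Wang \cite{MR2532715}, while carefully tracking the odd Chern-Simons transgression terms that are forced on us once fixed-point components are allowed to have arbitrary dimension. First I would apply equivariant localization to rewrite $\ind(h, \mathcal{T}\otimes\Theta_i(TM)\otimes(\mathbb{C}^N|_M, g))$ for $h=e^{2\pi i t}\in S^1$ as a sum over connected components $F_\alpha$ of the fixed-point set, each contribution being an integral over $F_\alpha$ of the $\widehat{A}$-form of $F_\alpha$, equivariant characteristic forms built from the isotypic decomposition of the normal bundle and from the $q$-expansion of $\Theta_i(TM)$, multiplied by the odd Chern-Weil form $\ch(\mathbb{C}^N|_{F_\alpha}, g, d)$ obtained by restricting the cocycle $g$.

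Second, I would introduce a formal modular parameter $\tau$ in the upper half plane and repackage the $q$-expansions of $\Theta_2(TM)$ and $\Theta_3(TM)$ into equivariant forms involving the Jacobi theta functions $\theta_2(t,\tau)$ and $\theta_3(t,\tau)$. This exhibits the equivariant index as a meromorphic function $F_i(t,\tau)$ on $\CC\times\mathbb{H}$. The rigidity of $\mathcal{T}\otimes\Theta_i(TM)\otimes(\mathbb{C}^N|_M, g)$ then reduces, by the standard Liu-type Liouville argument on the torus $\CC/(\mathbb{Z}+\mathbb{Z}\tau)$, to proving that $F_i(t,\tau)$ is holomorphic in $t$ for every fixed $\tau$; modular covariance under $\SL(2,\mathbb{Z})$ propagates holomorphicity among the $\theta_2,\theta_3,\theta_1$ channels and from one cusp to the others, so it suffices to check it locally and then transport.

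The main obstacle will be the odd factor $\ch(\mathbb{C}^N|_M, g, d)$, which in the $\SO(N)$ case lives in $\bigoplus_{k\geq 0}H^{4k+3}(M,\mathbb{R})$ and whose leading piece is a Chern-Simons $3$-form built from $\tr((g^{-1}dg)^3)$. When the fixed-point components $F_\alpha$ have positive dimension, the passage from the localization formula to a $\theta$-function expression requires a modular transgression that, in contrast to the even-dimensional setting, picks up additional correction terms involving the restriction of this odd Chern character. The simply connectedness hypothesis ensures that the $S^1$-action lifts unambiguously to the spin structure and that degree-$1$ data pulled back from $\SO(N)$ is trivial, while $H^3(M,\mathbb{R})=0$ forces the degree-$3$ odd Chern character, and therefore the dangerous correction, to be exact and to integrate away against the remaining closed characteristic forms. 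Together these two assumptions kill precisely the obstruction that prevented Liu-Wang's argument from being pushed beyond one-dimensional fixed sets.

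Once these cancellations are in place, the standard Jacobi theta identities show that $F_i(t,\tau)$ is a holomorphic (equivariant) elliptic object with no poles in the fundamental domain, hence constant in $t$, which establishes rigidity of both $\mathcal{T}\otimes\Theta_2(TM)\otimes(\mathbb{C}^N|_M, g)$ and $\mathcal{T}\otimes\Theta_3(TM)\otimes(\mathbb{C}^N|_M, g)$. The Toeplitz-Rarita-Schwinger statement is then extracted as a consequence by reading off the appropriate low-order $q$-coefficient, since $T_{\CC}M$ appears in the first non-trivial piece of the $q$-expansion of the Witten bundles.
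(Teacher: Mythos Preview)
Your overall strategy---localize, package into theta functions, and run Liu's modular/Jacobi-form argument---is the right one, but there is a genuine structural gap in the way you set up the function $F_i(t,\tau)$. You propose to localize the operator $\mathcal{T}\otimes\Theta_i(TM)\otimes(\mathbb{C}^N|_M,g)$ directly, which produces on each fixed component an integrand of the form
\[
\ch(\mathbb{C}^N|_M,g,d)\cdot\big(\text{theta expressions from }\Theta_i(TM)\big).
\]
The factor $\ch(\mathbb{C}^N|_M,g,d)$ is $\tau$-independent, and its degree-$(4i-1)$ pieces therefore contribute weight $0$ under $\tau\mapsto -1/\tau$, whereas the complementary theta part contributes weight $(\dim M+1)/2-2i$. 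The result is a sum of pieces of \emph{different} modular weights, so $F_i(t,\tau)$ is not a Jacobi form and the ``modular covariance propagates holomorphicity'' step does not go through. The correction terms you anticipate from a modular transgression simply do not arise at this stage, because no transgression is needed to pass from localization to theta functions; what is needed is a $\tau$-dependent replacement for $\ch(\mathbb{C}^N|_M,g,d)$ whose graded pieces carry the correct weights.

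The paper fixes this by a two-step mechanism that your sketch omits. First, one twists not by $(\mathbb{C}^N|_M,g)$ but by the loop-space bundles $Q_j(E)=\Delta(E)\otimes\bigotimes_n\Lambda_{q^n}(E_\mathbb{C})$, $\bigotimes_n\Lambda_{\mp q^{n-1/2}}(E_\mathbb{C})$, with the induced automorphisms $g^{Q_j(E)}$. The transgressed odd Chern characters $\ch(Q_j(E)_v,g^{Q_j(E)_v},d,\tau)$ are then shown (this is where the modular transgression actually enters) to be modular forms of weight $2i$ in degree $4i-1$; the anomalous degree-$3$ term in the $S$-transformation is exactly $c_3(M,[g])$, killed by $H^3(M,\mathbb{R})=0$. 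This makes the corresponding $F$'s honest holomorphic Jacobi forms of uniform weight $(\dim M+1)/2$, and Liu's argument applies to the $Q_j(E)$-twisted operators. Second, one must descend from $(Q_j(E),g^{Q_j(E)})$ to $(\mathbb{C}^N|_M,g)$: this is done by an induction on the $q^{1/2}$-coefficients $B_j$ of $Q_2(E)$, which requires checking that every $(B_j,g^{B_j})$ again satisfies $(g^{B_j})_*=1$ and $c_3(M,[g^{B_j}])=0$. Here is where simple connectedness is really used: it guarantees $g_*:\pi_1(M)\to\pi_1(\SO(N))=\mathbb{Z}_2$ is trivial, so $g$ lifts to $\Spin(N)$ (needed for $Q_1(E)$), and this triviality is then propagated to all $g^{B_j}$ via tensor and exterior powers. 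Your reading of simple connectedness as controlling ``degree-$1$ data'' or the lift of the $S^1$-action misses this point.
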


We would like to point out that if we extend $D\otimes \mathcal{E}$ as an operator from
\begin{equation*}
\Gamma(M,\Delta(TM)\otimes \mathcal{E}\otimes \mathbb{C}^N|_{M})
\end{equation*}
to itself by acting as identity on $\mathbb{C}^N|_{M}$,
the equivariant index
\begin{equation*}
\ind(h, \mathcal{T}\otimes \mathcal{E}\otimes (\mathbb{C}^N|_{M},g)), \quad h\in S^1
\end{equation*}
computes the {\it equivariant spectral flow} (see \cite[Theorem 2.7]{MR2200374})
\begin{equation*}
\SF(h, D\otimes \mathcal{E}, g(D\otimes \mathcal{E})g^{-1})
\end{equation*}
for the path
\begin{equation*}
(1-u)D\otimes \mathcal{E}+ug(D\otimes \mathcal{E})g^{-1},\quad u\in [0,1].
\end{equation*}
Hence when $M$ is simply connected and $H^3(M, \mathbb{R})=0$, the equivariant spectral flows
\begin{equation*}
\SF\Big(h, D\otimes \Theta_2(TM), g(D\otimes \Theta_2(TM))g^{-1}\Big)
\end{equation*}
and
\begin{equation*}
\SF\Big(h, D\otimes \Theta_3(TM), g(D\otimes \Theta_3(TM))g^{-1}\Big)
\end{equation*}
do not depend on $h\in S^1$. In particular, the equivariant spectral flow
\begin{equation*}
\SF\Big(h, D\otimes T_\mathbb{C}M, g(D\otimes T_\mathbb{C}M)g^{-1}\Big)
\end{equation*}
for the Rarita-Schwinger operators does not depend on $h\in S^1$.

Actually we have obtained more general results; see Theorems \ref{24}, \ref{37} and Corollaries \ref{V=TM}, \ref{V=0}.
Our approach is to combine Liu's modularity methods \cite{MR1331972,MR1396769},
the modular transgression in \cite{MR2495834} and some careful analysis of Chern classes for cocycles in odd $K$-theory.
Also, parallel to \cite[Theorem 2.4]{MR2532715}, we are able to show
the rigidity of the {\it Toeplitz-Signature operator}
\begin{equation*}
\mathcal{T}\otimes \Delta(TM)\otimes (\mathbb{C}^N|_{M},g)
\end{equation*}
in Theorem \ref{36} without assuming the simply connectedness of $M$ and the vanishing of
of $H^3(M,\mathbb{R})$.
Our results should have applications to
the study of topology of odd dimensional manifolds.

A brief outline of the paper is as follows. In Section \ref{sec2}, we first review some important characteristic
forms and then study modularities of odd Chern character forms. We then introduce elliptic
genera and Witten genera for the pair $(M, [g])$ on odd dimensional manifolds as well as relate
them to indices of Toeplitz operators. Section \ref{sec3} is devoted to the study of
rigidity of the twisted Toeplitz operators.

\vspace{3mm}\textbf{Acknowledgements}\
The authors would like to thank Professor Weiping Zhang for helpful suggestions.
F. H. was partially supported by the grant AcRF R-146-000-163-112 from National University of Singapore.
J. Y. was supported by the China Postdoctoral Science Foundation 2014M551805 and NSFC 11401552.
Part of work was done when the second author was visiting the Max-Planck-Institut f\"{u}r Mathematik at Bonn.

\section{Elliptic genera on odd dimensional manifolds}\label{sec2}

\subsection{Some Characteristic Forms}\label{sec2.1}
Let $M$ be a closed smooth Riemannian manifold. Let $\nabla^{TM}$ be the associated Levi-Civita
connection on $TM$ and $R^{TM}=(\nabla^{TM})^2$ be the curvature of $\nabla^{TM}$. Let $\widehat{A}(TM,\nabla^{TM})$
and $\widehat{L}(TM,\nabla^{TM})$ be the characteristic forms defined, respectively, by (c.f. \cite[Section 1.6]{MR1864735},
\cite[Section III.11]{MR1031992})
\begin{equation}
\begin{split}
\widehat{A}(TM,\nabla^{TM})
&={\det}^{1/2}\Big(\frac{\sqrt{-1}R^{TM}/(4\pi)}{\sinh\big(\sqrt{-1}R^{TM}/(4\pi)\big)}\Big),
\\
\widehat{L}(TM,\nabla^{TM})
&={\det}^{1/2}\Big(\frac{\sqrt{-1}R^{TM}/(4\pi)}{\tanh\big(\sqrt{-1}R^{TM}/(4\pi)\big)}\Big).
\end{split}
\end{equation}

Let $W$, $W'$ be two complex vector bundles over $M$ carrying connections $\nabla^{W}$, $\nabla^{W'}$, respectively.
Then the formal difference $W-W'$ carries a naturally induced connection which we denote by $\nabla^{\ominus}$.
We recall that the Chern character form associated to $(W-W',\nabla^{\ominus})$ is (see \cite[Section 1.6]{MR1864735} )
\begin{equation}
\ch(W-W',\nabla^{\ominus})=\tr\Big[\exp\Big(\frac{\sqrt{-1}}{2\pi}R^{W}\Big)\Big]
-\tr\Big[\exp\Big(\frac{\sqrt{-1}}{2\pi}R^{W'}\Big)\Big],
\end{equation}
where $R^{W}$ and $R^{W'}$ denote the curvatures of $\nabla^{W}$ and $\nabla^{W'}$ respectively.

For any $t\in \mathbb{C}$, let
\begin{equation}
\Lambda_t(W)=\sum_{i=0}^{\infty} t^i\Lambda^i(W),\quad
S_t(W)=\sum_{i=0}^{\infty} t^i S^i(W)
\end{equation}
denote, respectively, the total exterior and symmetric powers of $W$, which live in $K(M)[[t]]$.
The following relations between these two operations hold (see \cite[Chapter 3]{MR0224083}),
\begin{equation}
S_t(W)=\frac{1}{\Lambda_{-t}(W)},\quad \Lambda_{t}(W-W')=\frac{\Lambda_t(W)}{\Lambda_t(W')}.
\end{equation}

For a real Euclidean vector bundle $V$ over $M$, we denote by $V_\mathbb{C}$
the complexification of $V$, which carries a naturally induced Hermitian metric.
Set
\begin{equation*}
\widetilde{V_\mathbb{C}}=V_\mathbb{C}-\mathbb{C}^{\dim V} \in K(M).
\end{equation*}

If $V$ carries a spin structure, we denote by $\Delta(V)$ the Hermitian
bundle of spinors associated to $V$.

Let $q=e^{2\pi\sqrt{-1}\tau}$ with $\tau\in \mathbb{H}$, the
upper half complex plane. Set
\begin{equation}
\begin{split}\label{50}
Q_1(V)_{v}&=\Delta(V)\otimes\bigotimes_{n=1}^{\infty}\Lambda_{q^n}(\widetilde{V_\mathbb{C}})\ ,
\\
Q_2(V)_v&=\bigotimes_{n=1}^{\infty}\Lambda_{-q^{n-1/2}}(\widetilde{V_\mathbb{C}})\ ,
\quad\
Q_3(V)_{v}=\bigotimes_{n=1}^{\infty}\Lambda_{q^{n-1/2}}(\widetilde{V_\mathbb{C}})\ .
\end{split}
\end{equation}
Let $\nabla^{V}$ be an Euclidean connection on $V$, which canonically induces
Hermitian connections on the coefficients of the formal Fourier expansions of
$Q_j(V)_{v}$, $j=1,2,3$, in $q^{1/2}$. Thus we get induced connections $\nabla^{Q_j(V)_v}$
with $q^{1/2}$-coefficients on $Q_j(V)_{v}$, $j=1,2,3$.

To express the Chern character forms of $(Q_j(V)_v,\nabla^{Q_j(V)_v})$ explicitly, we recall
the four Jacobi theta functions as follows (see \cite{MR808396}),
\begin{align}
\theta(v,\tau)&=2c(q)q^{1/8}\sin(\pi v)\prod_{n=1}^\infty(1-q^ne^{2\pi\sqrt{-1}v})\prod_{n=1}^\infty(1-q^ne^{-2\pi\sqrt{-1}v}),
\\
\theta_1(v,\tau)&=2c(q)q^{1/8}\cos(\pi v)\prod_{n=1}^\infty(1+q^ne^{2\pi\sqrt{-1}v})\prod_{n=1}^\infty(1+q^ne^{-2\pi\sqrt{-1}v}),
\\
\theta_2(v,\tau)&=c(q)\prod_{n=1}^\infty(1-q^{n-1/2}e^{2\pi\sqrt{-1}v})\prod_{n=1}^\infty(1-q^{n-1/2}e^{-2\pi\sqrt{-1}v}),
\\
\theta_3(v,\tau)&=c(q)\prod_{n=1}^\infty(1+q^{n-1/2}e^{2\pi\sqrt{-1}v})\prod_{n=1}^\infty(1+q^{n-1/2}e^{-2\pi\sqrt{-1}v}),
\end{align}
where $c(q)=\prod_{n=1}^\infty(1-q^{n})$.
By using the Chern roots algorithm as in  \cite[Section 3]{MR2495834}, we obtain the following formulas,
\begin{equation}\label{3}
\begin{split}
\ch(Q_1(V)_v,\nabla^{Q_1(V)_v},\tau)&={\det}^{1/2}\Big(\frac{2\,\theta_1(R^V/(4\pi^2),\tau)}{\theta_1(0,\tau)}\Big),
\\
\ch(Q_j(V)_v,\nabla^{Q_j(V)_v},\tau)&={\det}^{1/2}\Big(\frac{\theta_j(R^V/(4\pi^2),\tau)}{\theta_j(0,\tau)}\Big),\quad j=2,3,
\end{split}
\end{equation}
where $R^{V}$ denotes the curvature of $\nabla^{V}$.

We recall characteristic forms for odd $K$-theory (\cite{MR1231957}, cf. \cite{MR1864735}). Let
\begin{equation*}
g:M\longrightarrow {\rm GL}(N,\mathbb{C})
\end{equation*}
be a smooth map from $M$ to the general linear
group ${\rm GL}(N,\mathbb{C})$ with $N$ a positive integer, and let $\mathbb{C}^N|_M$ denote the
trivial complex vector bundle of rank $N$ over $M$. Then $g$ can be viewed as an automorphism of $\mathbb{C}^N|_M$.

Let $d$ denote a trivial connection on $\mathbb{C}^N|_M$, and we associate with $g$ a natural element
\begin{equation*}
g^{-1}dg=g^{-1}\cdot d\cdot g-d \in \Omega^1(M,\End(\mathbb{C}^N|_M)).
\end{equation*}
Then $\tr\big[(g^{-1}dg)^n\big]$ is closed
for any positive odd integer $n$. Moreover, the cohomology class determined by $\tr\big[(g^{-1}dg)^n\big]$
in $H^{\rm odd}(M,\mathbb{C})$ depends only on the homotopy class $[g]$ of $g$.
We will denote by $c_n(M,[g])$ the cohomology class associated to the closed $n$-form
\begin{equation}
c_{n}(\mathbb{C}^N|_M,g,d)=\Big(\frac{1}{2\pi\sqrt{-1}}\Big)^{(n+1)/2}\tr\big[(g^{-1}dg)^n\big].
\end{equation}

The odd Chern character form $\ch(\mathbb{C}^N|_M,g,d)$ associated to $g$ and $d$ by definition is
\begin{equation}
\ch(\mathbb{C}^N|_M,g,d)=\sum_{n=1}^\infty \frac{n!}{(2n+1)!}c_{2n+1}(\mathbb{C}^N|_M,g,d).
\end{equation}
Alternatively, $\ch(\mathbb{C}^N|_M,g,d)$ is exactly the Chern-Simons form associated to the curve
\begin{equation}\label{53}
\nabla_u=(1-u)d+u\,g^{-1}\cdot d\cdot g= d+ u\,g^{-1}dg, \quad u\in [0,1]
\end{equation}
of connections on $\mathbb{C}^N|_M$, which is such that (see \cite[Section 1]{MR1231957})
\begin{equation}
d\ch(\mathbb{C}^N|_M,g,d)=\ch(\mathbb{C}^N|_M,d)-\ch(\mathbb{C}^N|_M,g^{-1}\cdot d\cdot g).
\end{equation}

\subsection{Modularities of odd Chern character forms}

Let $g:M\rightarrow \SO(N)$ be a smooth map from $M$ to the special orthogonal
group $\SO(N)$ with $N$ a positive integer.
We assume that $N$ is even and large enough. Otherwise, we replace $g$ by
$\begin{pmatrix}
g & 0
\\
0 & I
\end{pmatrix}$ with $I$ a certain identity matrix of large rank.

Let $E$ denote the trivial real vector bundle of rank $N$ over $M$.
We equip $E$ with the canonical trivial metric and trivial connection $d$ which is clearly Euclidean. Set (cf. \eqref{53})
\begin{equation}\label{5}
\nabla_u= d+ u\,g^{-1}dg, \quad u\in [0,1],
\end{equation}
then $\nabla_u$, $u\in [0,1]$ defines a curve of Euclidean connections on $E$ with $\nabla_0=d$
and $\nabla_1=g^{-1}\cdot d\cdot g$. Let $R_u$ be the curvature of $\nabla_u$, then
\begin{equation}\label{4}
R_u=(u^2-u)(g^{-1}dg)^2,\quad u\in [0,1].
\end{equation}

By complexification, the metric and the trivial connection on $E$ induce naturally a Hermitian metric and
a trivial Hermitian connection on $E_\mathbb{C}$.
Also, $g$ extends to a unitary automorphism of $E_\mathbb{C}$,
due to the obvious embedding $\SO(N)\subset{\rm U}(N)$, the unitary group.
Hence, \eqref{5} extends naturally to Hermitian connections on $E_\mathbb{C}$
with curvatures still given by \eqref{4}.

Let $\Delta(E)$ be the spinor bundle of $E$, which is a trivial Hermitian bundle of rank $2^{N/2}$.

Let $\pi_1(\cdot)$ denote the fundamental group of a topological space. We assume that the induced map
\begin{equation*}
g_*:\pi_1(M)\longrightarrow \pi_1(\SO(N))=\mathbb{Z}_2
\end{equation*}
is trivial. Then by \cite[Chapter 1]{MR1867354}, there exists a unique lift (depending on the choice of the base point) to the Spin group $\Spin(N)$,
\begin{equation}
{g}^{\Delta}: M\longrightarrow \Spin(N).
\end{equation}
${g}^{\Delta}$ can be viewed as an automorphism of $\Delta(E)$ preserving the Hermitian metric.
If we lift $d$ on $E$ to be a trivial Hermitian connection $d^\Delta$ on $\Delta(E)$, then
\begin{equation}
\nabla^\Delta_u=(1-u)\,d^\Delta+u\,(g^\Delta)^{-1}\cdot d^\Delta\cdot g^\Delta, \quad u\in [0,1]
\end{equation}
lift the connections in \eqref{5} on $E$ to $\Delta(E)$.

Let $Q_j(E)_{v}$, $j=1,2,3$ be the virtual bundles defined as in \eqref{50}. Then the action of $g$ on $E$
naturally induces actions $g^{Q_j(E)_v}$ on $Q_j(E)_v$. As explained below \eqref{50},
let $\nabla^{Q_j(E)_v}_0$ and $\nabla^{Q_j(E)_v}_1$ denote, respectively,
the connections on $Q_j(E)_v$ induced by $\nabla_0$ and $\nabla_1$.
From \eqref{3} and \cite[Theorem 2.2]{MR2495834}, we get that
\begin{align}
&\ch(Q_1(E)_v,\nabla^{Q_1(E)_v}_0,\tau)-\ch(Q_1(E)_v,\nabla^{Q_1(E)_v}_1,\tau)
\notag
\\
&=-d\int_{0}^{1}\frac{1}{8\pi^2}{\det}^{1/2}\Big(\frac{2\,\theta_1(R_u/(4\pi^2),\tau)}{\theta_1(0,\tau)}\Big)\tr\Big[g^{-1}dg
\frac{\theta_j'(R_u/(4\pi^2),\tau)}{\theta_j(R_u/(4\pi^2),\tau)}\Big]du\notag
\end{align}
and that for $j=2,3$ (compare with \cite[(5.4)-(5.5)]{MR2495834}),
\begin{align}
&\ch(Q_j(E),\nabla^{Q_j(E)}_0,\tau)-\ch(Q_j(E),\nabla^{Q_j(E)}_1,\tau)\notag
\\
&=-d\int_{0}^{1}\frac{1}{8\pi^2}{\det}^{1/2}\Big(\frac{\theta_j(R_u/(4\pi^2),\tau)}{\theta_j(0,\tau)}\Big)\tr\Big[g^{-1}dg
\frac{\theta_j'(R_u/(4\pi^2),\tau)}{\theta_j(R_u/(4\pi^2),\tau)}\Big]du.\notag
\end{align}

However, since $\tr\big[(g^{-1}dg)^n\big]$ vanishes for any positive even integer $n$ (see \cite[(1.40)]{MR1864735}), we
deduce from \eqref{4} that
for $j=1,2,3$,
\begin{equation}
{\det}^{1/2}\Big(\frac{\theta_j(R_u/(4\pi^2),\tau)}{\theta_j(0,\tau)}\Big)
=\exp\Big({\frac{1}{2}\tr\log}\frac{\theta_j(R_u/(4\pi^2),\tau)}{\theta_j(0,\tau)}\Big)=1.
\end{equation}

Therefore we have for $j=1,2,3$,
\begin{equation*}
\begin{split}
&\ch\big(Q_j(E)_v,\nabla^{Q_j(E)_v}_0,\tau\big)-\ch\big(Q_j(E)_v,\nabla^{Q_j(E)_v}_1,\tau\big)
\\
&\hspace{14em}=d\ch\big(Q_j(E)_v,g^{Q_j(E)_v},d,\tau\big),
\end{split}
\end{equation*}
where
\begin{equation}\label{7}
\begin{split}
&\ch(Q_1(E)_v,g^{Q_1(E)_v},d,\tau)
\\
&\quad=-\frac{2^{N/2}}{8\pi^2}\int_{0}^{1}\tr\Big[g^{-1}dg
\frac{\theta_1'(R_u/(4\pi^2),\tau)}{\theta_1(R_u/(4\pi^2),\tau)}\Big]du.
\end{split}
\end{equation}
and
\begin{equation}\label{cs}
\begin{split}
&\ch(Q_j(E)_v,g^{Q_j(E)_v},d,\tau)
\\
&\quad=-\frac{1}{8\pi^2}\int_{0}^{1}\tr\Big[g^{-1}dg
\frac{\theta_j'(R_u/(4\pi^2),\tau)}{\theta_j(R_u/(4\pi^2),\tau)}\Big]du, \quad j=2,3.
\end{split}
\end{equation}
As explained in \cite[Section 1.8]{MR1864735},
\begin{equation*}
\ch(Q_j(E)_v,g^{Q_j(E)_v},d,\tau), \ j=1,2,3
\end{equation*}are closed, and each determines an element in $H^{4i-1}(M,\mathbb{C})[[q^{1/2}]]$  depending only on
the homotopy class $[g]$.

Let
\begin{align*}
&\Gamma_0(2)=
\Big\{
\begin{pmatrix}
a & b
\\
c & d
\end{pmatrix}
\,\Big|\, a,b,c,d\in\mathbb{Z}, ad-bc=1
\Big\}
\end{align*}
as usual be the modular group, and
\begin{align}
&\Gamma_0(2)=
\Big\{
\begin{pmatrix}
a & b
\\
c & d
\end{pmatrix}
\in \SL_2(\mathbb{Z})
\,\Big|\, c\equiv 0\ ({\rm mod}\ 2)
\Big\},
\\
&\Gamma^0(2)=
\Big\{
\begin{pmatrix}
a & b
\\
c & d
\end{pmatrix}
\in \SL_2(\mathbb{Z})
\,\Big|\, b\equiv 0\ ({\rm mod}\ 2)
\Big\},
\\
\Gamma_\theta&=
\Big\{
\begin{pmatrix}
a & b
\\
c & d
\end{pmatrix}
\in \SL_2(\mathbb{Z})
\,\Big|\,
\begin{pmatrix}
a & b
\\
c & d
\end{pmatrix}\equiv
\begin{pmatrix}
1 & 0
\\
0 & 1
\end{pmatrix}
\text{or}
\begin{pmatrix}
0 & 1
\\
1 & 0
\end{pmatrix}
({\rm mod}\ 2)
\Big\}.
\end{align}
be the three modular subgroups of $\SL_2(\mathbb{Z})$.

\begin{defn}
Let $\Gamma$ be a subgroup of $\SL_2(\mathbb{Z})$. A modular form over $\Gamma$ is
a holomorphic function $f(\tau)$ on $\mathbb{H}\cup \{\infty\}$ such that for any
\begin{equation*}
\mathscr{G}=
\begin{pmatrix}
a & b
\\
c & d
\end{pmatrix}
\in \Gamma,
\end{equation*}
the following property holds,
\begin{equation*}
f(\mathscr{G}\tau):=f\Big(\frac{a\tau+b}{c\tau+d}\Big)=\chi(\mathscr{G})(c\tau+d)^k f(\tau),
\end{equation*}
where $\chi:\Gamma\longrightarrow \mathbb{C}^*$ is a character of $\Gamma$ and $k$ is called the weight of $f$.
\end{defn}

If $\omega$ is a differential form on $M$, we denote by $\omega^{(i)}$ the degree $i$ component of $\omega$.

\begin{prop}[{Compare with \cite[Theorem 5.1]{MR2495834}}]\label{modular}
For any integer $i\geq 2$,
\begin{equation*}
\{\ch(Q_j(E)_v,g^{Q_j(E)_v},d,\tau)\}^{(4i-1)},\quad j=1,2,3
\end{equation*}
are modular forms of weight $2i$ over $\Gamma_0(2)$, $\Gamma^0(2)$ and $\Gamma_\theta$, respectively.
\end{prop}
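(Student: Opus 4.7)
The plan is to reduce modularity of the degree-$(4i-1)$ component to modularity of the Taylor coefficients of the logarithmic derivative $f_j(v,\tau) := \theta_j'(v,\tau)/\theta_j(v,\tau)$ in $v$ near $v=0$. Since each of $\theta_1,\theta_2,\theta_3$ in the conventions of the excerpt is an \emph{even} function of $v$ (by inspection of the defining $\cos$-or-symmetric-product formulas), $f_j(v,\tau)$ is odd in $v$ and admits a Taylor expansion $f_j(v,\tau) = \sum_{k\geq 0} a_k^{(j)}(\tau)\, v^{2k+1}$. Substituting $v = R_u/(4\pi^2)$ into \eqref{7} and \eqref{cs} and noting that $R_u^{2k+1}$ is a form of degree $4k+2$, the degree-$(4i-1)$ component of $\ch(Q_j(E)_v,g^{Q_j(E)_v},d,\tau)$ equals, up to a universal nonzero numerical constant (and the harmless factor $2^{N/2}$ for $j=1$), $a_{i-1}^{(j)}(\tau)\cdot \int_0^1 \tr\bigl[g^{-1}dg\cdot R_u^{2i-1}\bigr]du$. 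Thus it suffices to prove that for each $j$ and each $k\geq 1$, $a_k^{(j)}(\tau)$ is a holomorphic modular form of weight $2k+2$ over the claimed subgroup.

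Next I would invoke the standard transformation laws of the Jacobi theta functions. The three subgroups $\Gamma_0(2)$, $\Gamma^0(2)$, $\Gamma_\theta$ are precisely the stabilizers in $\SL_2(\mathbb{Z})$ of $\theta_1,\theta_2,\theta_3$ respectively under the $\SL_2(\mathbb{Z})$-action permuting the triple $(\theta_1,\theta_2,\theta_3)$; this follows from the well-known actions of the generators, where $T:\tau\mapsto\tau+1$ fixes $\theta_1$ and swaps $\theta_2\leftrightarrow\theta_3$, while $S:\tau\mapsto -1/\tau$ swaps $\theta_1\leftrightarrow\theta_2$ and fixes $\theta_3$. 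For $\mathscr{G}=\left(\begin{smallmatrix} a & b \\ c & d \end{smallmatrix}\right)$ in the corresponding subgroup $\Gamma$, one then has a transformation law of the shape
$\theta_j\bigl(v/(c\tau+d),\,(a\tau+b)/(c\tau+d)\bigr)=\chi_j(\mathscr{G})\,(c\tau+d)^{1/2}\exp\bigl(\pi\sqrt{-1}\,cv^2/(c\tau+d)\bigr)\theta_j(v,\tau)$
for some character $\chi_j:\Gamma\to\mathbb{C}^{*}$. Taking $\partial/\partial v$ of the logarithm erases the $v$-independent character $\chi_j$ and yields $f_j\bigl(v/(c\tau+d),\,(a\tau+b)/(c\tau+d)\bigr)=2\pi\sqrt{-1}\,c\,v+(c\tau+d)\,f_j(v,\tau)$.

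Equating Taylor coefficients of $v^{2k+1}$ on both sides of this functional equation immediately gives $a_k^{(j)}(\mathscr{G}\tau)=(c\tau+d)^{2k+2}a_k^{(j)}(\tau)$ for every $k\geq 1$. The $k=0$ coefficient $a_0^{(j)}$ picks up a correction of $2\pi\sqrt{-1}\,c\,(c\tau+d)$ and is only quasi-modular of weight $2$, which is precisely why the proposition restricts to $i\geq 2$. Holomorphicity of $a_k^{(j)}$ on $\mathbb{H}\cup\{\infty\}$ is immediate from the convergent $q$-expansions of the theta functions. The main (though routine) obstacle is the careful index-matching $j\leftrightarrow\Gamma$ and the verification of the theta transformation laws on a chosen set of generators for each of $\Gamma_0(2)$, $\Gamma^0(2)$, $\Gamma_\theta$; once that bookkeeping is in place, the Taylor coefficient argument above delivers the proposition.
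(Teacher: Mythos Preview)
Your proof is correct and rests on the same theta-function transformation laws as the paper's, but the organization is cleaner. The paper applies the $T$- and $S$-transformations directly to the full form-valued expressions $\ch(Q_j(E)_v,g^{Q_j(E)_v},d,\tau)$, obtaining \eqref{1}--\eqref{2} with the stray degree-$3$ anomaly $-\frac{\tau\sqrt{-1}}{24\pi}c_3(E_\CC,g,d)$ appearing explicitly under $S$, and then checks modularity on the listed generators of each $\Gamma$. You instead first observe that the degree-$(4i-1)$ component factors as a fixed differential form on $M$ times the single scalar Taylor coefficient $a_{i-1}^{(j)}(\tau)$ of $\theta_j'/\theta_j$, and then prove modularity of that scalar in one stroke from the logarithmic-derivative transformation law; the quasi-modular $k=0$ coefficient is exactly the paper's $c_3$ anomaly seen from a different angle. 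Both arguments have the same content, but your factorization makes transparent why the restriction $i\geq 2$ is needed and avoids carrying the full Chern--Simons form through the computation.
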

\begin{proof}
Using the transformation formulas \cite[(3.29)-(3.31),(4.6)]{MR2495834}, we deduce directly from \eqref{7} and \eqref{cs} that
\begin{equation}
\begin{split}\label{1}
\ch(Q_1(E)_v,g^{Q_1(E)_v},d,\tau+1)&=\ch(Q_1(E)_v,g^{Q_1(E)_v},d,\tau),
\\
\ch(Q_2(E)_v,g^{Q_2(E)_v},d,\tau+1)&=\ch(Q_3(E)_v,g^{Q_3(E)_v},d,\tau).
\end{split}
\end{equation}
and that for any integer $i\geq 1$,
\begin{equation}
\begin{split}\label{64}
&\Big\{\ch\Big(Q_1(E)_v,g^{Q_1(E)_v},d,-\frac{1}{\tau}\Big)\Big\}^{(4i-1)}
\\
&=2^{N/2}\Big\{\tau^{2i}\ch(Q_2(E)_v,g^{Q_2(E)_v},d,\tau)
-\frac{\tau\sqrt{-1}}{24\pi}c_3(E_\mathbb{C},g,d)\Big\}^{(4i-1)}\ ,
\end{split}
\end{equation}
and
\begin{equation}
\begin{split}\label{2}
&\Big\{\ch\Big(Q_3(E)_v,g^{Q_3(E)_v},d,-\frac{1}{\tau}\Big)\Big\}^{(4i-1)}
\\
&=\Big\{\tau^{2i}\ch(Q_3(E)_v,g^{Q_3(E)_v},d,\tau)-\frac{\tau\sqrt{-1}}{24\pi}c_3(E_\mathbb{C},g,d)\Big\}^{(4i-1)}\ .
\end{split}
\end{equation}

Recall that the generators of $\Gamma_0(2)$ are $T$, $ST^2ST$,
the generators of $\Gamma^0(2)$ are $STS$, $T^2STS$ and the generators of $\Gamma_\theta$ are $S$, $T^2$ (see \cite{MR808396}),
where
\begin{equation}
S=
\begin{pmatrix}
0 & -1
\\
1 & 0
\end{pmatrix},\quad
T=
\begin{pmatrix}
1 & 1
\\
0 & 1
\end{pmatrix}.
\end{equation}
Due to the above fact, the proposition now follows from \eqref{1}-\eqref{2} in a standard way (see, e.g., \cite[(4.13)]{MR2495834}).
\end{proof}

\subsection{Elliptic genera in odd dimensions}

Set (see \cite{MR970288,MR1372798})
\begin{align}
\Theta_{1}(TM)_v&=\bigotimes_{n=1}^\infty S_{q^n}(\widetilde{T_\mathbb{C}M})\otimes
\bigotimes_{n=1}^{\infty}\Lambda_{q^n}(\widetilde{T_\mathbb{C}M}),
\\
\Theta_{2}(TM)_v&=\bigotimes_{n=1}^\infty S_{q^n}(\widetilde{T_\mathbb{C}M})\otimes
\bigotimes_{n=1}^{\infty}\Lambda_{-q^{n-{1/2}}}(\widetilde{T_\mathbb{C}M}),
\\
\Theta_{3}(TM)_v&=\bigotimes_{n=1}^\infty S_{q^n}(\widetilde{T_\mathbb{C}M})\otimes
\bigotimes_{n=1}^{\infty}\Lambda_{q^{n-{1/2}}}(\widetilde{T_\mathbb{C}M}),
\\
\Theta(TM)_v&=\bigotimes_{n=1}^\infty S_{q^n}(\widetilde{T_\mathbb{C}M}).
\end{align}
We introduce the odd analogues of the Landweber-Stong forms and the Witten forms as follows.

\begin{defn}[{Compare with \cite[Definition 3.1]{MR2495834}}]
We call
\begin{equation}
\begin{split}
\Phi_{{\rm L}}(\nabla^{TM},g,d,\tau)&=2^{\,(\dim M-1)/2}\,\widehat{L}(TM,\nabla^{TM})
\\
&\hspace{-20pt}\cdot\ch(\Theta_1(TM)_v,\nabla^{\Theta_1(TM)_v},\tau)\cdot\ch(Q_1(E)_v,g^{Q_1(E)_v},d,\tau)
\end{split}
\end{equation}
the Landweber-Stong type form of $M$ associated to $\nabla^{TM}$, $d$ and $g$. Also,
we call
\begin{equation}
\begin{split}
\Phi_{{\rm W}}(\nabla^{TM},g,d,\tau)
&=\widehat{A}(TM,\nabla^{TM})\ch(\Theta_2(TM)_v,\nabla^{\Theta_2(TM)_v},\tau)
\\
&\hspace{40pt}\cdot\ch(Q_2(E)_v,g^{Q_2(E)_v},d,\tau)
\end{split}
\end{equation}
\begin{equation}
\begin{split}
\Phi'_{{\rm W}}(\nabla^{TM},g,d,\tau)
&=\widehat{A}(TM,\nabla^{TM})\ch(\Theta_3(TM)_v,\nabla^{\Theta_3(TM)_v},\tau)
\\
&\hspace{40pt}\cdot\ch(Q_3(E)_v,g^{Q_3(E)_v},d,\tau),
\end{split}
\end{equation}
\begin{equation}
\begin{split}
\Psi_{{\rm W},\,j}(\nabla^{TM},g,d,\tau)
&=\widehat{A}(TM,\nabla^{TM})\ch(\Theta(TM)_v,\nabla^{\Theta(TM)_v},\tau)
\\
&\hspace{30pt}\cdot\ch(Q_j(E)_v,g^{Q_j(E)_v},d,\tau),\hspace{15pt} j=1,2,3
\end{split}
\end{equation}
the Witten type forms of $M$ associated to $\nabla^{TM}$, $d$ and $g$.
\end{defn}

Applying the Chern-Weil theory, we can express the Landweber-Stong type forms and the Witten type forms
in terms of theta functions and curvatures as in \cite[Proposition 3.1]{MR2495834} (see also \cite{MR1396769,MR1372798,MR1331972}).

Combining Proposition \ref{modular} and \cite[Proposition 3.2]{MR2495834}, we
establish the following modularities.

\begin{prop}[{Compare with \cite[Proposition 3.2]{MR2495834}}]
\
\begin{enumerate}[{\rm (i)}]
\item If $c_3(E_\mathbb{C},g,d)=0$, then for any integer $i\geq 1$,
\begin{equation*}
\{\Phi_{{\rm L}}(\nabla^{TM},g,d,\tau)\}^{(4i-1)},\quad \{\Phi_{{\rm W}}(\nabla^{TM},g,d,\tau)\}^{(4i-1)}
\end{equation*}
and $\{\Phi'_{{\rm W}}(\nabla^{TM},g,d,\tau)\}^{(4i-1)}$ are modular forms of
weight $2i$ over $\Gamma_0(2)$, $\Gamma^0(2)$ and $\Gamma_\theta$ respectively.

\item If $c_3(E_\mathbb{C},g,d)=0$ and the first Pontryagin form $p_1(TM,\nabla^{TM})=0$, then for any integer $i\geq 1$,
\begin{equation*}
\{\Psi_{{\rm W},\,j}(\nabla^{TM},g,d,\tau)\}^{(4i-1)},\quad j=1,2,3
\end{equation*}
are modular forms of weight $2i$ over $\Gamma_0(2)$, $\Gamma^0(2)$ and $\Gamma_\theta$ respectively.
\end{enumerate}
\end{prop}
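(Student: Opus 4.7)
The plan is to decompose each form as a product of two factors and to multiply the modularity statements for each. For $\Phi_{\mathrm{L}}, \Phi_{\mathrm{W}}, \Phi'_{\mathrm{W}}$ the first factor is the Landweber-Stong / Witten-type characteristic form built from $TM$, and for $\Psi_{\mathrm{W},j}$ it is the pure Witten form $\widehat{A}(TM,\nabla^{TM})\,\ch(\Theta(TM)_v,\nabla^{\Theta(TM)_v},\tau)$. The second factor is always the odd Chern character $\ch(Q_j(E)_v, g^{Q_j(E)_v}, d, \tau)$. Modularity of the first factor is \cite[Proposition 3.2]{MR2495834}, and modularity of the second is Proposition \ref{modular} together with the transformation formulas \eqref{1}, \eqref{64} and \eqref{2}. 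Once each factor is shown to be a genuine modular form of the appropriate weight over the appropriate subgroup, the degree-$(4i-1)$ component of the product will be modular of weight $2i$.

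Invariance under the non-$S$ generators of each subgroup is immediate from the $q^{1/2}$-expansions. For the odd factor this uses \eqref{1}: $\ch(Q_1(E)_v,\cdot,\tau)$ is $T$-invariant, which is the generator needed for $\Gamma_0(2)$, while $\ch(Q_2(E)_v,\cdot,\tau)$ and $\ch(Q_3(E)_v,\cdot,\tau)$ are swapped under $T$ and therefore individually $T^2$-invariant, as needed for $\Gamma^0(2)$ and $\Gamma_\theta$. The $TM$-factor satisfies the matching invariances by \cite[Proposition 3.2]{MR2495834}. No hypothesis is needed at this stage.

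The decisive step is the $S$-transformation. By \eqref{64} and \eqref{2}, the odd factor transforms as $\tau^{2k}$ times itself plus a correction supported in degree $3$ and proportional to $c_3(E_\mathbb{C},g,d)$. The hypothesis $c_3(E_\mathbb{C},g,d)=0$ kills this correction and promotes Proposition \ref{modular} from $i \geq 2$ to all $i \geq 1$, so that the odd factor is a genuine form-valued modular form. The $TM$-factor of $\Phi_{\mathrm{L}}, \Phi_{\mathrm{W}}, \Phi'_{\mathrm{W}}$ is $S$-modular without correction by \cite[Proposition 3.2]{MR2495834}; multiplying the two factors and extracting the degree-$(4i-1)$ component establishes (i). For $\Psi_{\mathrm{W},j}$ the $TM$-factor uses $\Theta(TM)_v = \bigotimes S_{q^n}(\widetilde{T_\mathbb{C}M})$ without any compensating $\Lambda$-tensor product, so its $S$-transformation acquires an additional correction proportional to $p_1(TM,\nabla^{TM})$; the extra hypothesis $p_1(TM,\nabla^{TM})=0$ cancels this correction, and the same product argument yields (ii). Holomorphy at $\tau = \infty$ is visible directly from the $q^{1/2}$-expansions of both factors.

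The main bookkeeping point, and the only real obstacle, is to confirm that once the two vanishing hypotheses are imposed no cross term of the form (correction)$\times$(main term) can survive in degree $4i-1$. This is automatic: each correction is a product of a low-degree characteristic form ($c_3$ in degree $3$, $p_1$ in degree $4$) with a $\tau$-dependent coefficient, and setting that form to zero annihilates every monomial that contains it. The case $i=1$ is precisely where these corrections would otherwise obstruct modularity, so the two hypotheses are sharp for this line of argument.
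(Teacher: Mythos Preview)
Your proposal is correct and follows exactly the approach the paper indicates: the paper's own proof is the single sentence ``Combining Proposition \ref{modular} and \cite[Proposition 3.2]{MR2495834}, we establish the following modularities,'' and you have faithfully unpacked that combination, including the observation that the hypothesis $c_3(E_\mathbb{C},g,d)=0$ upgrades Proposition \ref{modular} from $i\geq 2$ to $i\geq 1$ via \eqref{64}--\eqref{2}, and that $p_1(TM,\nabla^{TM})=0$ plays the analogous role for the $\Theta(TM)_v$-factor in part (ii).
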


We now assume that $M$ is a $(4k-1)$-dimensional closed oriented smooth manifold.
Let $[M]$ be the fundamental class of $M$. Set
\begin{align*}
\phi_{{\rm L}}(M,[g],\tau)&=\langle\Phi_{{\rm L}}(\nabla^{TM},g,d,\tau),[M]\rangle,
\\
\phi_{{\rm W}}(M,[g],\tau)&=\langle\Phi_{{\rm W}}(\nabla^{TM},g,d,\tau),[M]\rangle,
\\
\phi'_{{\rm W}}(M,[g],\tau)&=\langle\Phi'_{{\rm W}}(\nabla^{TM},g,d,\tau),[M]\rangle,
\\
\psi_{{\rm W},\,j}(M,[g],\tau)&=\langle\Psi_{{\rm W},\,j}(\nabla^{TM},g,d,\tau),[M]\rangle, \quad j=1,2,3.
\end{align*}

\begin{defn}
We call $\phi_{{\rm L}}(M,[g],\tau)$, $\phi_{{\rm W}}(M,[g],\tau)$ and $\phi'_{{\rm W}}(M,[g],\tau)$
the {\it elliptic genera of the pair} $(M, [g])$, and
call $\phi_{{\rm W}}(M,[g],\tau)$, $\phi'_{{\rm W}}(M,[g],\tau)$, $\psi_{{\rm W},\,j}(M,[g],\tau)$, $j=1,2,3$
the {\it Witten genera of the pair} $(M, [g])$.
\end{defn}

\begin{thm} Assume $c_3(M,[g])=0$. We have \newline
{\rm (i)} the elliptic genera of the pair $(M, [g])$

\begin{equation*}
\phi_{{\rm L}}(M,[g],\tau),\ \phi_{{\rm W}}(M,[g],\tau)\ \text{and}\ \phi'_{{\rm W}}(M,[g],\tau)
\end{equation*}
are modular forms of weight $(\dim M+1)/2$ over $\Gamma_0(2)$, $\Gamma^0(2)$, and $\Gamma_\theta$, respectively;

\vspace{5pt}

\noindent{\rm (ii)} if the first Pontryagin class $p_1(M)=0$, then the Witten genera of the pair $(M, [g])$
\begin{equation*}
\psi_{{\rm W},\,j}(M,[g],\tau),\quad j=1,2,3
\end{equation*}
are modular forms of weight $(\dim M+1)/2$ over $\Gamma_0(2)$, $\Gamma^0(2)$, and $\Gamma_\theta$, respectively.

\end{thm}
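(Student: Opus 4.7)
The plan is to deduce the theorem from the preceding proposition by taking the top-degree component of each characteristic form and pairing against $[M]$, bridging between the proposition's form-level hypothesis $c_3(E_\CC,g,d)=0$ and the theorem's cohomological hypothesis $c_3(M,[g])=0$ by a Stokes-type argument on the closed manifold $M$.

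For part (i), the first step is the weight bookkeeping. Since $\dim M=4k-1$, the pairing $\langle\Phi_{\rm L}(\nabla^{TM},g,d,\tau),[M]\rangle$ extracts $\{\Phi_{\rm L}\}^{(4k-1)}$. Tracking the factors, this top-degree component is a sum over $i+j=k$ of wedge products of degree-$(4i-1)$ components of the odd Chern character $\ch(Q_1(E)_v,g^{Q_1(E)_v},d,\tau)$ (weight $2i$ under $\Gamma_0(2)$ by Proposition \ref{modular}) with degree-$4j$ components of the even factor $\widehat{L}(TM)\ch(\Theta_1(TM)_v,\tau)$ (weight $2j$ by \cite[Proposition 3.2]{MR2495834}), giving total weight $2k=(\dim M+1)/2$. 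The same decomposition handles $\phi_{\rm W}$ and $\phi'_{\rm W}$ over $\Gamma^0(2)$ and $\Gamma_\theta$ respectively.

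The main obstacle is the $c_3$ correction term in the $S$-transformation formulas \eqref{64} and \eqref{2}, which is nonzero only in the $i=1$ piece. At the top degree it contributes a summand of the form
\begin{equation*}
\pm\,\frac{\tau\sqrt{-1}}{24\pi}\,c_3(E_\CC,g,d)\wedge\alpha(\tau)
\end{equation*}
to $\Phi_{\rm L}(\nabla^{TM},g,d,-1/\tau)^{(4k-1)}$, where $\alpha(\tau)$ is the degree-$(4k-4)$ component of the appropriate transformed even factor and, being a component of a closed characteristic form, is itself closed. The cohomological hypothesis $c_3(M,[g])=0$ means that the closed form $c_3(E_\CC,g,d)$ is exact; writing $c_3(E_\CC,g,d)=d\eta$, Stokes' theorem on the closed manifold $M$ yields
\begin{equation*}
\int_M c_3(E_\CC,g,d)\wedge\alpha(\tau)=\int_M d\bigl(\eta\wedge\alpha(\tau)\bigr)=0.
\end{equation*}
Thus the correction disappears after pairing with $[M]$, and $\phi_{\rm L},\phi_{\rm W},\phi'_{\rm W}$ satisfy the claimed transformation laws.

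For part (ii) the strategy is identical for $\psi_{{\rm W},j}$, the only new input being that the $S$-transformation of the even factor $\widehat{A}(TM,\nabla^{TM})\ch(\Theta(TM)_v,\nabla^{\Theta(TM)_v},\tau)$ produces, in addition, a correction of the form $p_1(TM,\nabla^{TM})\wedge\beta(\tau)$ with $\beta(\tau)$ closed. Under the cohomological hypothesis $p_1(M)=0$, the Chern-Weil form $p_1(TM,\nabla^{TM})$ is exact, and the same Stokes argument kills this correction after integration, completing the reduction to the modular transformation laws dictated by the preceding proposition.
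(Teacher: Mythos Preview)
Your proposal is correct and matches the paper's intended argument; the paper itself states the theorem immediately after the form-level proposition without supplying any proof, leaving the passage from the form-level hypotheses $c_3(E_\CC,g,d)=0$ and $p_1(TM,\nabla^{TM})=0$ to the cohomological hypotheses $c_3(M,[g])=0$ and $p_1(M)=0$ implicit. Your Stokes argument---writing the anomalous correction terms as (exact)$\wedge$(closed) and integrating over the closed manifold $M$---is exactly the missing step, and your degree/weight bookkeeping via the decomposition $\{\Phi\}^{(4k-1)}=\sum_{i+j=k}\{\text{odd}\}^{(4i-1)}\wedge\{\text{even}\}^{(4j)}$ is the right way to organize it.

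One small refinement for part~(ii): the $S$-transformation anomaly of $\widehat{A}(TM,\nabla^{TM})\ch(\Theta(TM)_v,\tau)$ is not literally a single term $p_1\wedge\beta$ but rather comes from the factor $\exp(c\,\tau\,p_1)$ appearing in the theta-function transformation, so the correction in each degree is a polynomial in $p_1$ wedged against closed forms. This does not affect your argument, since every such term still factors as $p_1\wedge(\text{closed})$ and hence is exact once $p_1(TM,\nabla^{TM})$ is exact; but it is worth phrasing the correction as ``divisible by $p_1$'' rather than ``of the form $p_1\wedge\beta$'' to be precise.
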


Assume that $M$ is spin. Then by the index formula \eqref{35}, we can interpret the above elliptic genera and Witten genera of the pair $(M, [g])$ on $(4k-1)$-dimensional manifolds
analytically as the indices of the twisted Toeplitz operators as follows,
\begin{align*}
\phi_{{\rm L}}(M,[g],\tau)&=-\ind (\mathcal{T}\otimes\Delta(TM)\otimes\Theta_1(TM)_v\otimes (Q_1(E)_v,g^{Q_1(E)_v})),
\\
\phi_{{\rm W}}(M,[g],\tau)&=-\ind (\mathcal{T}\otimes\Theta_2(TM)_v\otimes (Q_2(E)_v,g^{Q_2(E)_v})),
\\
\phi'_{{\rm W}}(M,[g],\tau)&=-\ind (\mathcal{T}\otimes\Theta_3(TM)_v\otimes (Q_3(E)_v,g^{Q_3(E)_v})),
\\
\psi_{{\rm W},\,j}(M,[g],\tau)&=-\ind (\mathcal{T}\otimes\Theta(TM)_v\otimes (Q_j(E)_v,g^{Q_j(E)_v})),\quad j=1,2,3.
\end{align*}

\section{Witten rigidity on odd dimensional manifolds}\label{sec3}

\subsection{An $S^1$-equivariant index theorem for Toeplitz operators}

Let $M$ be an odd dimensional closed smooth spin Riemannian manifold which
admits a circle action. Without loss of generality, we may assume that $S^1$
acts on $M$ isometrically and preserves the spin structure of $M$.

Let $\mathcal{E}$ be an $S^1$-equivariant complex vector bundle over $M$ carrying an $S^1$-invariant Hermitian
connection. Then the associated twisted Dirac operator $D\otimes \mathcal{E}$ is $S^1$-equivariant, which
implies that the corresponding orthogonal projection $P_+$  is also $S^1$-equivariant.

In addition, we assume $g:M\longrightarrow {\rm GL}(N,\mathbb{C})$ is $S^1$-invariant, i.e.,
\begin{equation}\label{11}
g(hx)=g(x),\quad \text{for any}\ h\in S^1\ \text{and}\ x\in M.
\end{equation}
Thus the twisted Toeplitz operator $\mathcal{T}\otimes \mathcal{E}\otimes (\mathbb{C}^N|_M,g)$ is $S^1$-equivariant.

Let $M^{S^1}$ denote the fixed submanifold of the circle action on $M$. In general, $M^{S^1}$ is not
connected. We fix a connected component $M^{S^1}_\alpha$ of $M^{S^1}$, and omit
the subscript $\alpha$ if there is no confusion.

Let $N$ denote the normal bundle to $M^{S^1}$ in $M$, which can be identified as the orthogonal
complement of $TM^{S^1}$ in $TM\big|_{M^{S^1}}$.
Then we have the following $S^1$-equivariant decomposition when restricted upon $M^{S^1}$,
\begin{equation}
TM\big|_{M^{S^1}}=N_{m_1}\oplus \cdots \oplus N_{m_l} \oplus TM^{S^1},
\end{equation}
where each $N_\gamma$, $\gamma=m_1,\cdots, m_l$, is a complex vector bundle such that $h\in S^1$ acts on it by $h^\gamma$.
To simplify the notation, we will write that
\begin{equation}\label{25}
TM\big|_{M^{S^1}}=\bigoplus_{\gamma\neq0} N_\gamma \oplus TM^{S^1}\ ,
\end{equation}
where $N_\gamma$ is a complex vector bundle such that $h\in S^1$ acts on it by $h^\gamma$ with
$\gamma\in \mathbb{Z}\backslash\{0\}$. Clearly, $N=\bigoplus_{\gamma\neq0} N_\gamma$.
From now on, we will regard $N$ as a complex vector bundle.
Let
\begin{equation*}
2\pi\sqrt{-1}x_\gamma^{\,j},\ j=1,\cdots,\dim N_\gamma
\end{equation*}
be the Chern roots of $N_\gamma$. Let
\begin{equation*}
\pm 2\pi\sqrt{-1}y_j,\ j=1,\cdots, (\dim M^{S^1}-1)/2
\end{equation*}
be the Chern roots of $TM^{S^1}\otimes\mathbb{C}$.

Similarly, let
\begin{equation}
\mathcal{E}\big|_{M^{S^1}}=\bigoplus_{\nu} \mathcal{E}_\nu
\end{equation}
be the $S^1$-equivariant decomposition of the restrictions of $\mathcal{E}$ over $M^{S^1}$, where
$\mathcal{E}_\nu$ is a complex vector bundle such that $h\in S^1$ acts on it by $h^\nu$ with
$\nu\in \mathbb{Z}$. We denote by
\begin{equation*}
2\pi\sqrt{-1}w_\nu^{\,j},\ j=1,\cdots,\dim \mathcal{E}_\nu
\end{equation*}
the Chern roots of $\mathcal{E}_\nu$.

For $f(\cdot)$ a holomorphic function, we denote by $f(y)(TM^{S^1})=\prod_j f(y_j)$ the
symmetric polynomial that gives characteristic class of $TM^{S^1}$, and we use the same
notation for $N_\gamma$.

The following equivariant index formula is an immediate consequence of the
odd equivariant index theorem for Toeplitz operators of Fang  \cite[Theorem 4.3]{MR2200374} and Liu-Wang \cite[Theorem 2.3]{MR2532715}.

\begin{prop}[{\cite[(2.5)]{MR2532715}}]\label{fixed}
Let $h=e^{2\pi\sqrt{-1}t}$,  $t\in [0,1]\backslash\mathbb{Q}$ be a topological generator of $S^1$. Then
\begin{equation}
\begin{split}\label{34}
&\ind\big(h,\mathcal{T}\otimes\mathcal{E}\otimes (\mathbb{C}^{N}|_{M},g)\big)
=-\,\Big\langle
\ch(\mathbb{C}^{N}|_{M},g,d)\,\frac{\pi y}{\sin (\pi y)}(TM^{S^1})
\\
&\hspace{20pt}\cdot\prod_{\gamma}\frac{1}{2\sqrt{-1}\sin\pi(x_\gamma+\gamma\,t)}(N_\gamma)
\cdot \sum_\nu\sum_j e^{2\pi\sqrt{-1}(w^j_\nu+\nu\,t)},\big[M^{S^1}\big]\Big\rangle\ ,
\end{split}
\end{equation}
where $\big[M^{S^1}\big]$ is the fundamental class of $M^{S^1}$ which carries the orientation compatible with
that of $M$ and $N$.
\end{prop}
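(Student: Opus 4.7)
The plan is to derive this fixed-point expression directly from the equivariant index theorem for odd Toeplitz operators due to Fang \cite{MR2200374} and Liu--Wang \cite{MR2532715}, reducing everything to the standard Chern-root bookkeeping that identifies each local contribution over $M^{S^1}$.

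First I would invoke the Lefschetz-type version of those theorems, which expresses $\ind(h,\mathcal{T}\otimes \mathcal{E}\otimes (\mathbb{C}^{N}|_{M},g))$ as $-1$ times the pairing of $[M^{S^1}]$ with a product of three equivariant characteristic factors over the fixed submanifold: an equivariant $\widehat{A}$-form of $TM|_{M^{S^1}}$, an equivariant Chern character of $\mathcal{E}|_{M^{S^1}}$, and the odd Chern character $\ch(\mathbb{C}^{N}|_{M},g,d)$ restricted to $M^{S^1}$. The overall sign matches the one already present in the non-equivariant formula \eqref{35}. Crucially, because $g$ is $S^{1}$-invariant by \eqref{11}, its odd Chern character restricted to $M^{S^1}$ is simply $\ch(\mathbb{C}^{N}|_{M},g,d)$ with no extra exponential factor in $t$; this is the only place the hypothesis \eqref{11} is used, and it is exactly what allows the odd Chern character to factor out of the localization untouched.

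Next I would evaluate each equivariant characteristic factor by decomposing along \eqref{25} and writing out Chern roots. For the tangential part $TM^{S^1}$, with Chern roots $\pm 2\pi\sqrt{-1}\,y_{j}$ on the complexification, the equivariant $\widehat{A}$-form is the ordinary $\widehat{A}(TM^{S^1},\nabla^{TM^{S^1}})$, and the identity $\sinh(\pi\sqrt{-1}\,y_{j})=\sqrt{-1}\sin(\pi y_{j})$ rewrites it as $\prod_{j}\frac{\pi y_{j}}{\sin(\pi y_{j})}$. For each normal summand $N_\gamma$, on which $h=e^{2\pi\sqrt{-1}t}$ acts by $h^\gamma$, the equivariant Bott contribution is $\prod_{j}(2\sqrt{-1}\sin\pi(x_{\gamma}^{j}+\gamma t))^{-1}$, where the shift $\gamma t$ inside the sine is the hallmark of the weight-$\gamma$ action on $N_\gamma$. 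Finally, for $\mathcal{E}|_{M^{S^1}}=\bigoplus_{\nu}\mathcal{E}_{\nu}$ with $h$ acting by $h^{\nu}$ on $\mathcal{E}_{\nu}$, the equivariant Chern character is $\sum_{\nu}\sum_{j}e^{2\pi\sqrt{-1}(w_{\nu}^{j}+\nu t)}$. Multiplying these three factors against $\ch(\mathbb{C}^{N}|_{M},g,d)$ and pairing with $[M^{S^1}]$ reproduces the displayed identity verbatim.

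There is no genuine analytic obstacle here; the only thing requiring care is the tracking of factors of $\sqrt{-1}$ and $2$ coming from the conversion between $\sinh$ of purely imaginary arguments and $\sin$, together with the orientation convention on $[M^{S^1}]$ inherited from $M$ and the complex structure on $N$. The irrationality assumption $t\in[0,1]\setminus\mathbb{Q}$ is only used to guarantee that $h$ topologically generates $S^{1}$, so that localization concentrates entirely on $M^{S^1}$ and not on any larger proper subgroup's fixed set. Since nothing beyond \cite[Theorem 4.3]{MR2200374} and \cite[Theorem 2.3]{MR2532715} is invoked, the proposition is genuinely a corollary of those two results.
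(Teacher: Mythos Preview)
Your proposal is correct and matches the paper's own treatment: the paper does not give a separate proof but simply states that the formula is an immediate consequence of \cite[Theorem 4.3]{MR2200374} and \cite[Theorem 2.3]{MR2532715}, citing it directly as \cite[(2.5)]{MR2532715}. Your Chern-root bookkeeping just makes explicit the routine identification that the paper leaves implicit.
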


As a direct application of Proposition \ref{fixed}, we deduce an odd analogue of the rigidity of the signature operator.
\begin{thm}\label{36}
$\mathcal{T}\otimes \Delta (TM)\otimes (\mathbb{C}^N|_M,g)$ is rigid.
\end{thm}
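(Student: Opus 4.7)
The plan is to apply the localization formula of Proposition \ref{fixed} with $\mathcal{E}=\Delta(TM)$ and reduce the rigidity statement to the classical Atiyah--Hirzebruch--Singer pattern used for the equivariant signature operator, exploiting in a crucial way that the $S^1$-invariance \eqref{11} of $g$ makes $\ch(\mathbb{C}^N|_M,g,d)\big|_{M^{S^1}}$ an honest form that is independent of the parameter $t$ of the circle action.

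First I would decompose $\Delta(TM)$ along the fixed set using \eqref{25}. Since $M$ is spin and the $S^1$-action preserves the spin structure, the action lifts to the spin principal bundle, and one obtains
\[
\Delta(TM)\big|_{M^{S^1}}\cong \Delta\Bigl(\bigoplus_\gamma N_\gamma\Bigr)\otimes \Delta(TM^{S^1}),
\]
where each $N_\gamma$ carries the complex structure on which $h\in S^1$ acts with weight $\gamma$. Consequently the Chern character $\sum_\nu\sum_j e^{2\pi\sqrt{-1}(w^j_\nu+\nu t)}$ appearing in \eqref{34} takes the explicit form
\[
\prod_\gamma\prod_{j=1}^{\dim N_\gamma}\bigl(e^{\pi\sqrt{-1}(x^j_\gamma+\gamma t)}+e^{-\pi\sqrt{-1}(x^j_\gamma+\gamma t)}\bigr)\cdot\prod_j 2\cos\pi y_j.
\]

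Next, inserting this into \eqref{34}, the $\sin$-denominators coming from the normal bundle pair off with the $\cos$-factors from $\Delta(N_\gamma)$ to produce cotangents,
\[
\frac{2\cos\pi(x^j_\gamma+\gamma t)}{2\sqrt{-1}\sin\pi(x^j_\gamma+\gamma t)}=\frac{1}{\sqrt{-1}}\cot\pi(x^j_\gamma+\gamma t),
\]
and on the fixed component one has the analogous identity $\frac{\pi y_j}{\sin\pi y_j}\cdot 2\cos\pi y_j=2\pi y_j\cot\pi y_j$. The right-hand side of \eqref{34} then takes exactly the shape of an equivariant $L$-genus integrand, multiplied by the $t$-independent factor $\ch(\mathbb{C}^N|_M,g,d)\big|_{M^{S^1}}$. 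Rigidity follows from the Atiyah--Hirzebruch/Atiyah--Bott argument in the same form already used by Liu--Wang in \cite{MR2532715}: the index $\ind\bigl(h,\mathcal{T}\otimes\Delta(TM)\otimes(\mathbb{C}^N|_M,g)\bigr)$ is on the one hand a virtual $S^1$-character and hence a Laurent polynomial in $h=e^{2\pi\sqrt{-1}t}$, and on the other hand, by the cotangent reformulation above, a sum of rational functions in $h$ whose only possible poles lie at roots of unity. The oddness $\cot(-u)=-\cot u$ together with the reality of the real spinor bundle $\Delta(TM)$ force all these apparent poles to cancel, so the function extends holomorphically to $\mathbb{CP}^1$ and is therefore constant.

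The main obstacle is that $M^{S^1}$ is in general a disjoint union of positive-dimensional (odd-dimensional) submanifolds rather than isolated points, so the cancellation-of-poles step has to be carried out after integration of characteristic forms over each component $M^{S^1}_\alpha$, with careful bookkeeping of the $\Delta(TM^{S^1})$-contribution coming from the odd rank of $TM^{S^1}$ and of the orientation conventions entering $[M^{S^1}]$. In contrast to the Witten-type rigidity results Theorems \ref{24} and \ref{37}, the present statement requires no hypothesis on $\pi_1(M)$ or on $H^3(M,\mathbb{R})$: the full $\Delta(TM)$-twist produces an integrand of signature/$L$-genus type for which the analogue of the anomalous $c_3(E_\mathbb{C},g,d)$ term in \eqref{64}--\eqref{2} never enters, so no modular obstruction has to be cancelled.
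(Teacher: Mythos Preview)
Your approach coincides with the paper's: apply Proposition~\ref{fixed} with $\mathcal{E}=\Delta(TM)$, rewrite the fixed-point contribution as an $\widehat{L}$-type integrand with cotangent factors multiplied by the $t$-independent form $\ch(\mathbb{C}^N|_M,g,d)$, and conclude by an Atiyah--Bott rational-function argument. The paper does exactly this, defining the rational function $f(z)$ in the variable $z=h$, observing that it agrees with the continuous equivariant index on the dense set of topological generators (hence is bounded on $S^1$), and then using that $\lim_{z\to\infty}f(z)$ exists to conclude $f$ is constant.

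Your justification of the pole-cancellation step is however misplaced. You correctly record that the index is both a finite Laurent polynomial in $h$ and, via localization, a rational function with poles only at roots of unity; that combination \emph{by itself} already forces the poles at roots of unity to be absent, since a Laurent polynomial has none in $\mathbb{C}^*$. The sentence invoking ``the oddness $\cot(-u)=-\cot u$ together with the reality of $\Delta(TM)$'' is a non sequitur here: neither the oddness of cotangent nor any reality/self-duality property of the spinor bundle cancels individual poles at roots of unity (a symmetry $h\mapsto h^{-1}$ at best pairs a pole at $\zeta$ with one at $\zeta^{-1}$ without removing either). Replace this by the continuity/Laurent-polynomial argument you already have in hand, as the paper does. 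There is also a second small gap: having removed the poles in $\mathbb{C}^*$, you still need to check finiteness at $0$ and $\infty$ before asserting the function extends holomorphically to $\mathbb{CP}^1$; this is precisely where the cotangent form is used, since each factor $(z^\gamma e^{\pi\sqrt{-1}x_\gamma}+e^{-\pi\sqrt{-1}x_\gamma})/(z^\gamma e^{\pi\sqrt{-1}x_\gamma}-e^{-\pi\sqrt{-1}x_\gamma})$ has a finite limit as $z\to 0$ and as $z\to\infty$. The paper states this as ``$\lim_{z\to\infty}f(z)$ exists''.
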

\begin{proof}
For $z\in \mathbb{C}$, set
\begin{equation}
\begin{split}
f(z)&=-2^{\,(\dim M^{S^1}-1)/2}\Big\langle
\ch(\mathbb{C}^{N}|_{M},g,d)\,\frac{\pi y}{\tan (\pi y)}(TM^{S^1})
\\
&\hspace{70pt}
\cdot\prod_{\gamma}
\frac{z^\gamma e^{\pi\sqrt{-1}x_\gamma}+ e^{-\pi\sqrt{-1}x_\gamma}}
{z^\gamma e^{\pi\sqrt{-1}x_\gamma}-e^{-\pi\sqrt{-1}x_\gamma}}(N_\gamma)
,\big[M^{S^1}\big]\Big\rangle\ .
\end{split}
\end{equation}
Then $f(z)$ is a rational function and has no poles on $\mathbb{C}\setminus S^1$.

But by \eqref{25} and \eqref{34}, we see that $f$ coincides with the continuous function
$\ind\big(z,\mathcal{T}\otimes\mathcal{E}\otimes (\mathbb{C}^{N}|_{M},g)\big)$ on the dense subset which
consists of the topological generators of $S^1$. Thus $f$ must be bounded on $S^1$.

Now $f(z)$ is constant on $\mathbb{C}$ due to the fact that $\lim_{z\rightarrow \infty} f(z)$ exists.
\end{proof}

\subsection{Witten rigidity in odd dimensions}
Let $V$ be an $S^1$-equivariant real spin vector bundle over $M$.
Let $\Delta(V)$ be the corresponding spinor bundle.

Let $g:M\longrightarrow \SO(N)$ be an $S^1$-invariant smooth map from $M$ to $\SO(N)$ with $N$
a positive even integer large enough.
Let $E$ denote the trivial real vector bundle of rank $N$ over $M$,
which is equipped with the canonical trivial metric and trivial connection $d$.
Set (cf. \eqref{50})
\begin{equation}
\begin{split}
Q_1(E)&=\Delta(E)\otimes\bigotimes_{n=1}^{\infty}\Lambda_{q^n}(E_\mathbb{C})\ ,
\\
Q_2(E)&=\bigotimes_{n=1}^{\infty}\Lambda_{-q^{n-1/2}}(E_\mathbb{C})\ ,
\quad\
Q_3(E)=\bigotimes_{n=1}^{\infty}\Lambda_{q^{n-1/2}}(E_\mathbb{C})\ .
\end{split}
\end{equation}
Let $g^{Q_j(E)}$, $j=1,2,3$ be the actions on $Q_j(E)$ respectively induced from the action of $g$ on $E$.

Following \cite{MR1396769}, we introduce the following elements in $K(M)[[q^{1/2}]]$,
\begin{align}
\Theta_{1}(TM|V)&=\bigotimes_{n=1}^\infty S_{q^n}(T_\mathbb{C}M)\otimes
\bigotimes_{n=1}^{\infty}\Lambda_{q^n}(V_\mathbb{C}),\label{38}
\\
\Theta_{2}(TM|V)&=\bigotimes_{n=1}^\infty S_{q^n}(T_\mathbb{C}M)\otimes
\bigotimes_{n=1}^{\infty}\Lambda_{-q^{n-{1/2}}}(V_\mathbb{C}),
\\
\Theta_{3}(TM|V)&=\bigotimes_{n=1}^\infty S_{q^n}(T_\mathbb{C}M)\otimes
\bigotimes_{n=1}^{\infty}\Lambda_{q^{n-{1/2}}}(V_\mathbb{C}).\label{39}
\end{align}

For simplicity, denote
\begin{equation*}
\Theta_{1}(TM|TM),\ \Theta_{2}(TM|TM)\ \text{and}\ \Theta_{3}(TM|TM)
\end{equation*}
 by
\begin{equation*}
\Theta_{1}(TM),\ \Theta_{2}(TM)\ \text{and}\ \Theta_{3}(TM),
\end{equation*}
respectively.

If $V$ is even dimensional, let $\Delta(V)=\Delta_+(V)\oplus \Delta_-(V)$ be
the natural $\mathbb{Z}_2$-grading. Set
\begin{equation}\label{40}
\Theta(TM|V)=\bigotimes_{n=1}^\infty S_{q^n}(T_\mathbb{C}M)\otimes
\bigotimes_{n=1}^{\infty}\Lambda_{-q^n}(V_\mathbb{C}).
\end{equation}

Let $H^*_{S^1}(M,\mathbb{Z})=H^*(M\times_{S^1}ES^1,\mathbb{Z})$ denote the $S^1$-equivariant cohomology group
of $M$, where $ES^1$ is the universal $S^1$-principal bundle over the classifying space $BS^1$ of $S^1$. So
$H^*_{S^1}(M,\mathbb{Z})$ is a module over $H^*(BS^1,\mathbb{Z})$ induced by the projection
$\pi:M\times_{S^1}ES^1\rightarrow BS^1$. Recall that
\begin{equation}
H^*(BS^1,\mathbb{Z})=\mathbb{Z}[[u]]
\end{equation}
with $u$ being a generator of degree $2$.

The $S^1$-equivariant characteristic class of an $S^1$-bundle $W$ over $M$ by definition is the usual
characteristic class of the bundle $W\times_{S^1}ES^1$ over $M\times_{S^1}ES^1$.
Let $p_1(\cdot)_{S^1}$ denote the first $S^1$-equivariant pontrjagin class.

We suppose that there exists some integer $n\in \mathbb{Z}$ such that
\begin{equation}\label{22}
p_1(V)_{S^1}-p_1(TM)_{S^1}=n\cdot \pi^*u^2.
\end{equation}
Following \cite{MR1331972}, we call $n$ the anomaly to rigidity.

The following theorems generalize
the Witten rigidity theorems and vanishing theorems \cite[Corollary 3.1]{MR1331972} to the case of odd dimensional manifolds.
They are similar to \cite[Theorem 2.5 and Corollary 3.6]{MR2532715}, while
without putting restriction on the dimension of fixed point set. Instead we put some topological conditions on $g$.

\begin{thm}\label{24}
Suppose $g_*=1$, $c_3(M, [g])=0$ and \eqref{22} holds. Then we have
\begin{enumerate}[{\rm (i)}]
\item If $n=0$, then following $S^1$-equivariant Toeplitz operators are rigid,
\begin{equation*}
\begin{split}
&\mathcal{T}\otimes \Delta(V)\otimes \Theta_1(TM|V)\otimes (Q_1(E),g^{Q_1(E)}),
\\
&\mathcal{T}\otimes \Theta_2(TM|V)\otimes (Q_2(E),g^{Q_2(E)}),
\\
&\mathcal{T}\otimes \Theta_3(TM|V)\otimes (Q_3(E),g^{Q_3(E)}),
\\
&\mathcal{T}\otimes (\Delta_+(V)-\Delta_-(V))\otimes \Theta(TM|V)\otimes (Q_j(E),g^{Q_j(E)}),\ j=1,2,3.
\end{split}
\end{equation*}

\item If $n<0$, then the equivariant indices of the above operators all vanish.
\end{enumerate}

\end{thm}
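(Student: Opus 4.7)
My plan is to adapt Liu's modular method \cite{MR1331972,MR1396769} to this odd-dimensional Toeplitz setting, with the essential new ingredients being the odd Chern character form $\ch(Q_j(E),g^{Q_j(E)},d,\tau)$, whose modularity was established in Proposition \ref{modular}, and the vanishing hypothesis $c_3(M,[g])=0$ that kills its anomalous modular transgression.

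First I would apply Proposition \ref{fixed} to each of the six operators in (i). Since $g$ is $S^1$-invariant, the Toeplitz factor $\ch(\mathbb{C}^N|_M,g,d)$ restricts to each connected component $M^{S^1}_\alpha$ of the fixed set independently of the parameter $h=e^{2\pi\sqrt{-1}t}$. Using the equivariant decomposition \eqref{25} of $TM|_{M^{S^1}}$, the analogous decomposition of $V|_{M^{S^1}}$, and the theta function formulas \eqref{3} for $\Theta_j(TM|V)$ and $Q_j(E)$, the right-hand side of \eqref{34} expands into an integral over $M^{S^1}_\alpha$ of a product of Jacobi theta quotients evaluated at $y_j$, $x_\gamma^j+\gamma t$ and the corresponding weights of $V$, multiplied by the restriction of the odd Chern character form $\ch(Q_j(E),g^{Q_j(E)},d,\tau)$ to $M^{S^1}_\alpha$. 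The condition $g_*=1$ ensures the existence of the spin lift $g^\Delta$ on $M$, so that $Q_1(E)$ and its automorphism $g^{Q_1(E)}$ are well-defined.

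Next I would promote the real parameter $t$ to a complex variable and sum over $\alpha$, obtaining a meromorphic function $F_j(t,\tau)$ on $\mathbb{C}\times\mathbb{H}$ whose restriction to $t\in\mathbb{R}\setminus\mathbb{Q}$ coincides with the equivariant index. Two claims then need to be verified, both following the template of \cite{MR1331972,MR1396769}: first, the apparent poles of $F_j$ at rational $t$ that come from the factors $1/\theta_k(x_\gamma+\gamma t,\tau)$ cancel across the different fixed components; second, under the generators of $\Gamma_0(2)$, $\Gamma^0(2)$, or $\Gamma_\theta$, $F_j$ transforms as a Jacobi-like form of weight $(\dim M+1)/2$. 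The action of $T:\tau\mapsto\tau+1$ is essentially immediate; for $S:\tau\mapsto -1/\tau$ one combines the classical theta transformation with the odd Chern character transformations \eqref{64} and \eqref{2} from the proof of Proposition \ref{modular}. The hypothesis $c_3(M,[g])=0$ kills the anomalous term proportional to $\tau c_3(E_\mathbb{C},g,d)$, while the equivariant anomaly \eqref{22} produces the expected exponential Jacobi factor of index proportional to $n$.

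The main obstacle will be rigorously tracking how the odd Chern character factor $\ch(Q_j(E),g^{Q_j(E)},d,\tau)|_{M^{S^1}_\alpha}$ interacts with the cross-component pole cancellation; this factor is an inhomogeneous form on $M^{S^1}_\alpha$ and was not present in the even-dimensional analysis. Because it is $S^1$-invariant and globally defined on $M$, it restricts coherently to all components, so Proposition \ref{modular} together with $c_3(M,[g])=0$ should ensure that the $S$-transformation intertwines the residue contributions correctly, as in the even-dimensional setting. Once this is established, Liu's argument concludes: when $n=0$ the Jacobi-like form has index zero and is therefore constant in $t$, giving rigidity; when $n<0$ a negative-index argument in the spirit of \cite[Lemma 3.2]{MR1396769} forces $F_j\equiv 0$, yielding vanishing of the corresponding equivariant indices.
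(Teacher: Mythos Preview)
Your proposal is correct and follows essentially the same route as the paper, which packages the argument as Theorem~\ref{30} (the equivariant indices, written via Proposition~\ref{fixed} in terms of theta functions, are \emph{holomorphic} Jacobi forms of index $n/2$) and then invokes \cite[Theorem~1.2]{MR781735} to conclude rigidity for $n=0$ and vanishing for $n<0$; the paper also first treats the virtual bundles $\Theta_j(TM|V)_v$, $Q_j(E)_v$ and only afterward observes that the non-virtual versions differ by $q$-dependent constants. One clarification on what you flag as the ``main obstacle'': since $g$ is $S^1$-invariant, the factor $\ch(Q_j(E)_v,g^{Q_j(E)_v},d,\tau)|_{M^{S^1}_\alpha}$ is independent of $t$ and therefore plays no role whatsoever in the pole analysis---the holomorphicity argument (Lemma~\ref{32} plus Liu's pole-moving trick) goes through verbatim, and the odd Chern character enters only through the $\tau$-modularity, exactly as you describe via \eqref{64}--\eqref{2} and the hypothesis $c_3(M,[g])=0$.
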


\begin{proof}
Combining \cite[Theorem 1.2]{MR781735} and Theorem \ref{30}, which will be given in the next subsection,
we obtain the virtual version of the theorem. Since the equivariant index of the virtual case and
that of the non-virtual one differ by a constant (depending on $q$), we complete the proof of Theorem \ref{24}.
\end{proof}

\begin{thm}\label{37}
Suppose $g_*=1$, $c_3(M, [g])=0$ and \eqref{22} holds. Then we have
\begin{enumerate}[{\rm (i)}]
\item If $n=0$, then following $S^1$-equivariant Toeplitz operators are rigid,
\begin{equation*}
\mathcal{T}\otimes \Theta_2(TM|V)\otimes (E_\mathbb{C},g),\quad
\mathcal{T}\otimes \Theta_3(TM|V)\otimes (E_\mathbb{C},g).
\end{equation*}

\item If $n<0$, then the equivariant indices of the above operators vanish.
\end{enumerate}

\end{thm}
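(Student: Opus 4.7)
My plan is to adapt the proof of Theorem \ref{24} to the simpler twisting pair $(E_\mathbb{C}, g)$. The key simplification over Theorem \ref{24} is that the odd Chern character $\ch(E_\mathbb{C}, g, d)$ entering the Toeplitz index formula \eqref{35} is a fixed closed form independent of $\tau$, whereas in Theorem \ref{24} the factor $\ch(Q_j(E), g^{Q_j(E)}, d, \tau)$ contributes its own modularity via Proposition \ref{modular}. Concretely, I will first establish the virtual version of the statement using Liu's modular rigidity machine, then transfer to the non-virtual version exactly as in Theorem \ref{24} via \cite[Theorem 1.2]{MR781735}, since the virtual/non-virtual comparison only absorbs constants in $q^{1/2}$ and does not interact with the $S^1$-action.

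The core step begins with Proposition \ref{fixed}, which localises
\[
F_j(\tau, t) \;:=\; \ind\!\bigl(e^{2\pi\sqrt{-1}t},\, \mathcal{T}\otimes \Theta_j(TM|V)\otimes (E_\mathbb{C}, g)\bigr), \qquad j = 2, 3,
\]
to a sum of integrals over the components of $M^{S^1}$. Since $g$ is $S^1$-invariant, the pulled-back form $\ch(E_\mathbb{C}, g, d)|_{M^{S^1}}$ is a fixed closed odd form depending neither on $t$ nor on $\tau$, so it factors cleanly through the equivariant characteristic-class computation. I then assemble the equivariant Witten form
\[
\widehat{A}_{S^1}(TM)\,\ch_{S^1}\!\bigl(\Theta_j(TM|V),\tau\bigr)\,\ch(E_\mathbb{C}, g, d)
\]
and verify that $F_j(\tau, t)$ is a meromorphic Jacobi-type form in $(\tau, t)$: the $\ch(E_\mathbb{C}, g, d)$ factor is trivially modular of weight $0$, so the weight $(\dim M+1)/2$ and the levels $\Gamma^0(2)$ (for $j=2$), $\Gamma_\theta$ (for $j=3$) are inherited entirely from the classical Witten piece. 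The equivariant anomaly assumption \eqref{22} provides the correct Jacobi index via the substitution $y_j \mapsto y_j$, $x_\gamma\mapsto x_\gamma + \gamma t$. The hypothesis $g_*=1$ supplies the Spin lift $g^\Delta$ so that $\Delta(E)$ carries a compatible $g$-action, while $c_3(M,[g])=0$ kills the anomalous term of \eqref{64}-\eqref{2} which would otherwise break the $S$-transformation law; here modularity is needed only for the $\Theta_j(TM|V)$ factor and not for the cocycle, but the same $c_3$-cancellation must be used to ensure the combined form transforms as a genuine Jacobi form.

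With these modular properties in hand, Liu's argument concludes in the standard way. When $n=0$, $F_j(\tau, t)$ is a holomorphic Jacobi form of index $0$ on $\mathbb{H}\times\mathbb{C}$, and the apparent poles of the fixed-point sum at $x_\gamma + \gamma t = 0$ must cancel after applying the $\tau \leftrightarrow -1/\tau$ transformation that swaps $\Theta_2 \leftrightarrow \Theta_3$ (as in \eqref{1}); this forces $F_j$ to be $t$-independent, proving rigidity. When $n<0$, a comparison of growth at the two cusps $\tau = \sqrt{-1}\infty$ and $\tau = 0$ forces $F_j \equiv 0$. The main technical obstacle I anticipate is controlling the interaction of the $\tau$-independent odd Chern character with the Jacobi-form transformation law: one must check that no new anomaly beyond $c_3(M,[g])=0$ is introduced, and that the restriction of $\ch(E_\mathbb{C}, g, d)$ to the normal bundle of $M^{S^1}$ introduces no spurious $t$-dependence. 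Both points reduce to systematic use of the $S^1$-invariance of $g$ together with the Chern-Simons/transgression identities underlying Proposition \ref{modular}.
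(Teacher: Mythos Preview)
Your direct Jacobi-form approach has a genuine gap: the function
\[
F_j(\tau,t)=\ind\bigl(e^{2\pi\sqrt{-1}t},\,\mathcal{T}\otimes\Theta_j(TM|V)_v\otimes(E_{\mathbb C},g)\bigr)
\]
is \emph{not} a Jacobi form in general, and the assertion that ``$\ch(E_{\mathbb C},g,d)$ is trivially modular of weight $0$, so the weight $(\dim M+1)/2$ is inherited from the classical Witten piece'' is where the argument breaks. Under the $S$-transformation $\tau\mapsto-1/\tau$, the theta-function integrand $\Omega(y,x_\gamma,u_\nu,t,\tau)$ rescales each Chern root by $\tau$; hence its degree-$2k$ component acquires a factor $\tau^{k+\dim N}$. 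In the pairing $\langle\ch(E_{\mathbb C},g,d)\cdot\Omega,[M^{S^1}]\rangle$ the degree-$(4i-1)$ component of $\ch(E_{\mathbb C},g,d)$ meets the degree-$(\dim M^{S^1}-4i+1)$ component of $\Omega$, contributing with weight $(\dim M+1)/2-2i$. Since $\ch(E_{\mathbb C},g,d)$ generically has nonzero components in several degrees $7,11,15,\ldots$ (for $g$ valued in $\SO(N)$ the degrees $1,5,9,\ldots$ vanish, and $c_3=0$ is assumed), $F_j$ is a sum of pieces of \emph{different} modular weights and the first transformation law in \eqref{42} fails. In Theorem~\ref{30} this problem disappears precisely because Proposition~\ref{modular} furnishes the missing $\tau^{2i}$ on the degree-$(4i-1)$ piece of $\ch(Q_j(E)_v,g^{Q_j(E)_v},d,\tau)$; a $\tau$-independent cocycle cannot supply it.

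The paper's proof proceeds by an entirely different route that sidesteps this obstruction: it never tries to make $(E_{\mathbb C},g)$ fit the Jacobi-form machine directly. Instead one expands $\Theta_2(TM|V)=\sum A_jq^{j/2}$ and $Q_2(E)=\sum B_jq^{j/2}$, notes that $B_1=-E_{\mathbb C}$, and uses Theorem~\ref{24} (already proved via the genuine Jacobi forms of Theorem~\ref{30}) to get the ``good property'' for each $q$-coefficient $\sum_{i+j=\ell}\mathcal T\otimes A_i\otimes(B_j,g^{B_j})$. One then peels off the $A_i\otimes(B_1,g^{B_1})$ terms by an induction whose base case is the Liu--Wang theorem for $A_0$, and whose inductive step re-applies Theorem~\ref{24} with $g$ replaced by $g^{B_j}$; this last move requires checking that $(g^{B_j})_*=1$ and $c_3(M,[g^{B_j}])=0$ are inherited from the hypotheses on $g$ (Lemma~\ref{61} and the degree-$3$ expansion of \eqref{cs}). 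The outcome is rigidity/vanishing for $\mathcal T\otimes A_i\otimes(E_{\mathbb C},g)$ for all $i$, which is the assertion. Your proposal would need to be replaced by this extraction argument, or by some other mechanism that compensates for the missing modular weight on the cocycle side.
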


\begin{proof}
Suppose $\Theta_2(TM|V)$ and $Q_2(E)$ admit formal Fourier
expansion in $q^{1/2}$ as
\begin{equation}
\Theta_2(TM|V)=\sum_{j=0}^\infty A_j\,q^{j/2}\,,
\quad Q_2(E)=\sum_{j=0}^\infty B_j\,q^{j/2},
\end{equation}
with $A_j$'s, $B_j$'s being elements in $K(M)$.
In particular, we verify that
\begin{equation}
\begin{split}\label{33}
A_0&=\mathbb{C}|_{M},
\quad A_1=-V_\mathbb{C},
\quad A_2=T_\mathbb{C}M\oplus \Lambda^2 (V_\mathbb{C}),
\\
B_0&=\mathbb{C}|_{M},
\quad B_1=-E_\mathbb{C},
\quad B_2=\Lambda^2 (E_\mathbb{C}).
\end{split}
\end{equation}

We will use the following convention for the sake of simplicity.

\vspace{2pt}

\noindent{\bf Convention} We will say an $S^1$-equivariant twisted Toeplitz operator $P$ has
\emph {good property}, if $P$ is rigid when $n=0$, and if $P$ has vanishing property when $n<0$.

\vspace{2pt}

Applying Theorem \ref{24} and picking up the $q^{\frac{j+1}{2}}$-coefficient in the expansion of the operator
\begin{equation*}
\mathcal{T}\otimes \Theta_2(TM|V)\otimes (Q_2(E),g^{Q_2(E)}),
\end{equation*}
we see that the equivariant Toeplitz operator
\begin{equation}\label{45}
\mathcal{T}\otimes A_0\otimes (B_{j+1}, g^{B_{j+1}}) \cdots +\mathcal{T}\otimes A_j\otimes (B_1, g^{B_1})
\end{equation}
has the good property.

First note that the operator $\mathcal{T}\otimes A_0\otimes (B_{j}, g^{B_{j}})$ has the good property for all $j\geq 1$ due to the theorem of Liu-Wang \cite[Theorem 2.4]{MR2532715}. Here no conditions about Chern classes of $(B_{j}, g^{B_{j}})$ are needed.

From Theorem \ref{24}, we see that under the condition
\begin{equation*}
g_*=1,\ c_3(M, [g])=0,
\end{equation*}
taking $j=1$, the operator
\begin{equation*}
\mathcal{T}\otimes A_0\otimes (B_{2}, g^{B_{2}})+\mathcal{T}\otimes A_1\otimes (B_1, g^{B_1})
\end{equation*}
has the good property. Therefore, under the condition $g_*=1$, $c_3(M, [g])=0$, the equivariant Toeplitz operator
$$\mathcal{T}\otimes A_1\otimes (B_1, g^{B_1})$$
has the good property.
We will show that the operator $$\mathcal{T}\otimes A_1\otimes (B_{j}, g^{B_{j}})$$ has the good property for all $j\geq 1$.
It suffices to show that
\begin{equation}\label{46}
(g^{B_j})_*=1, \ c_3\Big(M,\big[g^{B_j}\big]\Big)=0,\quad \text{for}\ j\geq 2.
\end{equation}

Since $c_3(M, [g])=0$, taking the degree $3$ component of \eqref{cs}, we get
\begin{equation}
\sum_{j=1}^\infty c_3\Big(M,\big[g^{B_j}\big]\Big)\,q^{j/2}=0,
\end{equation}
which implies $c_3\big(M,\big[g^{B_j}\big]\big)=0$ for all $j\geq 2$.

Observe that $B_j$ is the sum of bundles of the form
\begin{equation*}
\Lambda^{n_1}E_\CC\otimes \Lambda^{n_2} E_\CC\otimes\cdots\otimes\Lambda^{n_k} E_\CC,\quad 1\leq n_1,\cdots,n_k\leq \dim E.
\end{equation*}
We have the following lemma.

\begin{lem}\label{61}
Let $V_1, \cdots, V_n, V$ be complex vector spaces. Let
\begin{equation*}
h_i: M\longrightarrow {\rm Aut}\,(V_i),\ i=1, \cdots, n \quad \text{and}\quad h: M\longrightarrow {\rm Aut}\,(V)
\end{equation*}
be cocyles in $K^{-1}(M)$. If the induced maps on the fundamental groups satisfy
$(h_i)_*=1$, $i=1,\cdots,n$ and $h_*=1$, then we have
\vspace{-5pt}
\begin{equation*}
(h_1\otimes \cdots \otimes h_n)_*=1 \quad  \text{and}\quad \Big(\overbrace{h\wedge \cdots \wedge h}^m\Big)_*=1,
\end{equation*}
where $h_1\otimes \cdots \otimes h_n\in {\rm Aut}\,(V_1\otimes\cdots \otimes V_n)$ is defined by
\begin{equation*}
(h_1\otimes \cdots \otimes h_n)(v_1\otimes\cdots \otimes v_n)=h_1(v_1)\otimes\cdots \otimes h_n(v_n),\quad v_i\in V_i,
\end{equation*}
and $\overbrace{h\wedge \cdots \wedge h}^m\in {\rm Aut}\,(\Lambda^m(V))$ is defined by
\vspace{-5pt}
\begin{equation*}
\Big(\overbrace{h\wedge \cdots \wedge h}^m\Big)(u_1\wedge\cdots \wedge u_m)=h(u_1)\wedge\cdots \wedge h(u_m),\quad u_i\in V.
\end{equation*}
\end{lem}
\begin{proof}
For a loop $\gamma$ representing an element in $\pi_1(M)$, since $(h_i)_*=1$, $i=1, \cdots n$, and $h_*=1$, there
exist homotopies $\eta_i(t)$, $\eta(t)$, $t\in [0,1]$ such that
\begin{equation*}
\eta_i(0)=h_i\circ \gamma, \ \eta_i(1)={\rm const}; \ \eta(0)=h\circ \gamma,\ \eta(1)={\rm const}.
\end{equation*}
Therefore, $\eta_1(t)\otimes\cdots\otimes\eta_n(t)$ connects the loop
\begin{equation*}
(h_1\otimes  \cdots \otimes h_n)\circ \gamma=(h_1\circ \gamma) \otimes \cdots \otimes (h_n\circ \gamma)
\end{equation*}
to a constant loop in ${\rm Aut}\,(V_1\otimes\cdots \otimes V_n)$. Similarly $\overbrace{\eta(t)\wedge\cdots\wedge\eta(t)}^m$ connects the loop
\vspace{-10pt}
\begin{equation*}
\Big(\overbrace{h\wedge \cdots \wedge h}^m\Big)\circ \gamma=\overbrace{(h\circ \gamma) \wedge \cdots \wedge (h_n\circ \gamma)}^m
\end{equation*}
to a constant loop in ${\rm Aut}\,(\Lambda^m(V))$.
\end{proof}

As we have assumed $g_*=1$,  the above lemma tells us that
$(g^{B_j})_*=1$. Therefore \eqref{46} holds and the operator $$\mathcal{T}\otimes A_1\otimes (B_{j}, g^{B_{j}})$$ has the good property for all $j\geq 1$.

Now taking $j=2$ in \eqref{45}, we see that the operator
\begin{equation*}
\mathcal{T}\otimes A_0\otimes (B_{3}, g^{B_{3}})+\mathcal{T}\otimes A_1\otimes (B_2, g^{B_2})+\mathcal{T}\otimes A_2\otimes (B_1, g^{B_1}).
\end{equation*} has the good property. Since from the above discussion the operators
\begin{equation*}
\mathcal{T}\otimes A_0\otimes (B_{3}, g^{B_{3}}) \text{ and }\ \mathcal{T}\otimes A_1\otimes (B_2, g^{B_2})
\end{equation*}
both have the good property, we obtain that the operator
$$\mathcal{T}\otimes A_2\otimes (B_1, g^{B_1})$$ has the good property.
Therefore, due to \eqref{46}, the operator $$\mathcal{T}\otimes A_2\otimes (B_{j}, g^{B_{j}})$$ has the good property for all $j\geq 1$.

A standard induction procedure shows that $\mathcal{T}\otimes A_i\otimes B_j$ has the good property for any $i$, $j\geq 1$. In particular, we see $\mathcal{T}\otimes A_i\otimes B_1$ has the good property for all $i\geq 0$.
We finish the proof for $\mathcal{T}\otimes \Theta_2(TM|V)\otimes (E_\mathbb{C},g)$.

It is easy to see that with little modification, the above deduction still
applies to  the operator $\mathcal{T}\otimes \Theta_3(TM|V)\otimes (E_\mathbb{C},g)$.
\end{proof}

We would like to point out that we deduce $c_3\big(M,\big[g^{B_j}\big]\big)=0$ in \eqref{46} from (\ref{cs}) by taking the advantage of
the special positive energy representation that $Q_2(E)$ is constructed on.
An alternative deduction is by using the following proposition and the observation above Lemma \ref{61}.

\begin{prop}\label{60}
Let $V_1,\cdots, V_n$, $V$ be complex vector spaces. Let
\begin{equation*}
h_i: M\longrightarrow {\rm Aut}\,(V_i),\ i=1,\cdots,n \quad \text{and}\quad h: M\longrightarrow {\rm Aut}\,(V)
\end{equation*}
be cocyles in $K^{-1}(M)$.

\noindent{\rm (i)} The following equality holds,
\begin{equation}\label{54}
\ch(V_1\otimes\cdots\otimes V_n,h_1\otimes \cdots \otimes h_n,d)
=\sum_{i=1}^n\frac{\dim V_1\cdots\dim V_n}{\dim V_i}\ch(V_i,h_i,d).
\end{equation}
\noindent{\rm (ii)} For any $j$, $m\geq 1$,
\vspace{-10pt}
\begin{equation*}
c_{2j-1}\Big(\Lambda^m(V), \overbrace{h\wedge \cdots \wedge h}^m,d\Big)
\end{equation*}
is a constant multiples of $c_{2j-1}(V,h,d)$.

\end{prop}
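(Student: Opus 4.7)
The plan is to handle (i) and (ii) by directly expressing $H^{-1}dH$ in a form whose trace can be computed from its constituent pieces, combined with the vanishing $\tr[(h^{-1}dh)^{2l}]=0$ for $l\ge 1$ that underlies every odd Chern-character calculation in this paper.

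For (i), I would set $\alpha_i=h_i^{-1}dh_i\in\Omega^1(M,\End V_i)$ and let $\tilde{\alpha}_i$ denote its canonical extension to $V_1\otimes\cdots\otimes V_n$, acting as $\alpha_i$ on the $i$-th slot and as identity on the remaining slots. A direct Leibniz computation yields $H^{-1}dH=\sum_i\tilde{\alpha}_i$ for $H=h_1\otimes\cdots\otimes h_n$. The crucial observation is that $\tilde{\alpha}_i\tilde{\alpha}_j=-\tilde{\alpha}_j\tilde{\alpha}_i$ whenever $i\neq j$: the endomorphism parts act on disjoint tensor factors and thus commute, while the scalar one-form parts wedge anti-symmetrically. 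Hence $\bigl(\sum_i\tilde{\alpha}_i\bigr)^2=\sum_i\tilde{\alpha}_i^{\,2}$, and this element is central in the algebra generated by the $\tilde{\alpha}_i$. Writing
\begin{equation*}
(H^{-1}dH)^{2n+1}=\Bigl(\sum_i\tilde{\alpha}_i\Bigr)\Bigl(\sum_j\tilde{\alpha}_j^{\,2}\Bigr)^n
\end{equation*}
and expanding the second factor by the multinomial theorem, every monomial has exactly one slot carrying an odd exponent. On taking the trace and using $\tr_{V_j}[\alpha_j^{2l}]=0$ for $l\ge 1$, only the ``concentrated'' monomials $\tilde{\alpha}_i^{2n+1}$ contribute, giving $\tr[(H^{-1}dH)^{2n+1}]=\sum_i\bigl(\prod_{j\ne i}\dim V_j\bigr)\tr[\alpha_i^{2n+1}]$. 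Summing the normalized series produces the formula in (i).

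For (ii), I would first identify $(\wedge^m h)^{-1}d(\wedge^m h)=\rho_{\ast}(\alpha)$ with $\alpha=h^{-1}dh$, where $\rho_{\ast}:\End V\to\End(\Lambda^m V)$ is the derivation sending $X$ to the endomorphism $u_1\wedge\cdots\wedge u_m\mapsto\sum_i u_1\wedge\cdots\wedge Xu_i\wedge\cdots\wedge u_m$. Applying the splitting principle, if $\lambda_1,\ldots,\lambda_N$ are the formal Chern roots of $\alpha$, then the formal eigenvalues of $\rho_{\ast}(\alpha)$ on $\Lambda^m V$ are the subset sums $\lambda_{i_1}+\cdots+\lambda_{i_m}$ with $i_1<\cdots<i_m$, so
\begin{equation*}
\tr_{\Lambda^m V}\bigl[\rho_{\ast}(\alpha)^{2j-1}\bigr]=\sum_{|I|=m}\Bigl(\sum_{i\in I}\lambda_i\Bigr)^{2j-1}.
\end{equation*}
Expanding this symmetric polynomial in the power-sum basis $p_l=\sum\lambda_i^l=\tr[\alpha^l]$ and imposing $p_{2l}=0$ for all $l\ge 1$, one reads off the coefficient of $p_{2j-1}$ as an explicit polynomial $C_{m,j}(N)$ in $N=\dim V$ (for example $C_{2,2}(N)=N-4$ and $C_{3,2}(N)=(N-3)(N-6)/2$); this coefficient furnishes the asserted scalar multiple.

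The main obstacle is the combinatorial cleanup in (ii): after the reduction $p_{2l}=0$ one initially still sees mixed monomials in $p_1,p_3,p_5,\ldots$ of total degree $2j-1$, and each must be shown either to carry a forbidden even power sum (and so drop out) or to collapse onto $p_{2j-1}$ itself. A careful bookkeeping via Newton's identities relating elementary symmetric and power-sum polynomials dispatches this, and it is precisely here that the hypothesis $p_1=d\log\det h=0$ -- automatic in the $\SO(N)$-valued application to $g^{B_j}$ in Theorem \ref{37} -- is needed to kill the residual purely $p_1$-type contributions. Once this step is done, (ii) follows from the coefficient extraction described above.
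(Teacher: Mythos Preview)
Your argument for part (i) is correct and in fact slightly more direct than the paper's: you compute $\tr[(H^{-1}dH)^{2n+1}]$ by hand, while the paper runs the same idea through the Chern--Simons integral $\int_0^1\tr[\dot\nabla_u\exp(R_u)]du$ with the tensor connection. Both routes rest on the same two facts --- the Leibniz decomposition $H^{-1}dH=\sum_i\tilde\alpha_i$ and the vanishing $\tr[\alpha_i^{2l}]=0$ --- and arrive at \eqref{54}.

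Part (ii), however, has a genuine gap. The formula
\[
\tr_{\Lambda^m V}\bigl[\rho_\ast(\alpha)^{2j-1}\bigr]=\sum_{|I|=m}\Bigl(\sum_{i\in I}\lambda_i\Bigr)^{2j-1}
\]
is not valid here: $\alpha=h^{-1}dh$ is a $1$-form--valued endomorphism, and such objects cannot be ``diagonalised'' with $1$-form eigenvalues. Indeed, if $\alpha$ were diagonal with $1$-form entries $\lambda_i$, then $\alpha^2$ would be diagonal with entries $\lambda_i^2=0$, forcing $\tr[\alpha^{2j-1}]=0$ for $j\ge2$ --- which is false in general. The splitting principle applies to curvatures ($2$-forms), not to Maurer--Cartan forms. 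Your commuting-variable calculation therefore gives wrong constants: for $m=2$, $j=2$ you obtain $C_{2,2}(N)=N-4$, but the correct value is $N-2$, as one checks from the stabilisation $\Lambda^2(V\oplus\mathbb{C})=\Lambda^2V\oplus V$ together with $c_3(\Lambda^2\mathbb{C}^2,\wedge^2 h,d)=(\tr\alpha)^3=0$. The extra hypothesis $p_1=0$ you introduce is neither present in the proposition nor does it repair this discrepancy.

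The paper avoids all of this by working with the generating function $\Lambda_t(V)$ and the transgression formula: since $\ch(\Lambda_t(V),\nabla_u)=\det(1+t\,e^{\sqrt{-1}R_u/2\pi})$ collapses to $(1+t)^{\dim V}$ once $\tr[(h^{-1}dh)^{2l}]=0$ is used, the Chern--Simons form becomes
\[
-\frac{\sqrt{-1}}{2\pi}\int_0^1(1+t)^{\dim V}\,\tr\Bigl[h^{-1}dh\cdot\tfrac{t\,e^{\sqrt{-1}R_u/2\pi}}{1+t\,e^{\sqrt{-1}R_u/2\pi}}\Bigr]du,
\]
whose degree-$(2j-1)$ component is manifestly a polynomial $\mathcal{P}_j(t)$ times $\tr[\alpha^{2j-1}]$, because the fraction is a power series in $R_u\propto\alpha^2$. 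Extracting the coefficient of $t^m$ then gives (ii) directly, with no symmetric-function bookkeeping and no auxiliary hypothesis on $p_1$.
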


\begin{proof}
{\rm (i)} Let (cf. \eqref{53})
\begin{equation*}
\nabla^{V_i}_u= d+ u\,h_i^{-1}dh_i,\quad u\in [0,1]
\end{equation*}
be the curves of connections on $V_i$, $i=1,2$, respectively. Then $V_1\otimes V_2$ naturally carries a curve of
tensor connections
\begin{equation*}
\nabla^{V_1\otimes V_2}_u= d+ u(h_1^{-1}dh_1\otimes {\rm id}+{\rm id}\otimes h_2^{-1}dh_2),\quad u\in [0,1].
\end{equation*}
We verify directly that the curvature $R^{V_1\otimes V_2}_u$ of $\nabla^{V_1\otimes V_2}_u$ is given by
\begin{equation}\label{55}
R^{V_1\otimes V_2}_u=(u^2-u)\big((h_1^{-1}dh_1)^2\otimes {\rm id}+{\rm id}\otimes(h_2^{-1}dh_2)^2\big).
\end{equation}
Using the explicit formula of the Chern-Simons form \cite[(1.25)]{MR1231957}, we get
\begin{align}\label{56}
\ch(V_1\otimes V_2, h_1\otimes h_2,d)&=\frac{1}{2\pi\sqrt{-1}}\int_{0}^1\tr\Big[(h_1^{-1}dh_1\otimes {\rm id}+{\rm id}\otimes h_2^{-1}dh_2)
\notag\\
&\hspace{5em}\cdot\exp\big({\sqrt{-1}R^{V_1\otimes V_2}_u/(2\pi)}\big)\Big]du.
\end{align}
Since $\tr\big[(h^{-1}dh)^k\big]$ vanishes for any positive even integer $k$ (see \cite[(1.40)]{MR1864735}),
by \eqref{55} and \eqref{56}, we obtain \eqref{54} for the case $n=2$.
The proof of \eqref{54} for the general case follows in a similar way.

\

\noindent{\rm (ii)} We consider the curve of connections
\begin{equation*}
\nabla^{V}_u= d+ u\,h^{-1}dh, \quad u\in [0,1]
\end{equation*}
on $V$. Let $R^V_u=(u^2-u)(h^{-1}dh)^2$ be the curvature of $\nabla^{V}_u$.

$\nabla^V_u$ canonically
induces a connection $\nabla^{\Lambda_t(V)}$ on $\Lambda_t(V)$ for any $t\in \mathbb{C}$.
Furthermore, we can compute the Chern character form of
$(\Lambda_t(V),\nabla^{\Lambda_t(V)})$ as follows,
\begin{equation}\label{63}
\ch\big(\Lambda_t(V),\nabla^{\Lambda_t(V)}\big)=\det\Big(1+t\exp\Big(\frac{\sqrt{-1}}{2\pi}R_u^V\Big)\Big),\quad u\in [0,1].
\end{equation}

As in \cite[(2.8)]{MR2495834}, we deduce from \eqref{63} that for $t\in \mathbb{C}\backslash\{-1\}$,
\begin{equation}
\begin{split}\label{57}
&\frac{d}{du}\ch(\Lambda_t(V),\nabla^{\Lambda_t(V)})
\\
&=\frac{\sqrt{-1}}{2\pi}d\Big(\ch(\Lambda_t(V),\nabla^V_u)
\tr\Big[\frac{d \nabla_u^V}{du}\frac{t\,e^{\sqrt{-1}R^V_u/(2\pi)}}{1+t\,e^{\sqrt{-1}R^V_u/(2\pi)}}\Big]\Big),
\end{split}
\end{equation}
Let $h^{\Lambda_t(V)}$ be the actions on $\Lambda_t(V)$ induced from the action of $h$ on $V$.
By \cite[(1.25)]{MR1231957} and \eqref{57}, we get that for $t\in \mathbb{C}\backslash\{-1\}$,
\begin{equation}
\begin{split}\label{51}
&\ch(\Lambda_t(V),h^{\Lambda_t(V)},d)
\\
&=-\frac{\sqrt{-1}}{2\pi}\int_{0}^1(1+t)^{\dim V}
\tr\Big[h^{-1}dh \frac{t\,e^{\sqrt{-1}R^V_u/(2\pi)}}{1+t\,e^{\sqrt{-1}R^V_u/(2\pi)}}\Big]du.
\end{split}
\end{equation}

Taking the degree $2j-1$ component of \eqref{51}, we get
\begin{equation}\label{52}
c_{2j-1}\Big(\Lambda_t(V),h^{\Lambda_t(V)},d\Big)=\mathcal{P}_j(t)\,c_{2j-1}(V,h,d), \quad j\geq 1,
\end{equation}
where $\mathcal{P}_j(t)$ is a polynomial in $t$. Now the second item of the proposition follows
by taking the coefficients of $t^m$ of both sides of \eqref{52}.
\end{proof}

Putting $V=TM$ in Theorem \ref{37}, we have

\begin{cor}\label{V=TM}
Suppose $g_*=1$, $c_3(M, [g])=0$. Then the operators
$$\mathcal{T}\otimes \Theta_2(TM)\otimes (E_\mathbb{C},g),
\quad
\mathcal{T}\otimes \Theta_3(TM)\otimes (E_\mathbb{C},g)$$
are rigid.
\end{cor}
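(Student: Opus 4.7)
The plan is to apply Theorem \ref{37} with the specific choice $V = TM$, viewing $TM$ as an $S^1$-equivariant real spin vector bundle over $M$ (which is automatic since $M$ itself is assumed spin and the $S^1$ action preserves the spin structure). Once the hypotheses of Theorem \ref{37} are verified under this substitution, the corollary follows immediately from case (i) of that theorem, combined with the notational convention introduced just below \eqref{39} that $\Theta_j(TM|TM) = \Theta_j(TM)$ for $j=2,3$.

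The three hypotheses to check are: (a) $g_* = 1$, (b) $c_3(M,[g])=0$, and (c) the anomaly-to-rigidity relation \eqref{22} with $n=0$. The first two are hypotheses of the corollary, so they transfer directly. For (c), substituting $V=TM$ yields
\begin{equation*}
p_1(V)_{S^1} - p_1(TM)_{S^1} = p_1(TM)_{S^1} - p_1(TM)_{S^1} = 0 = 0 \cdot \pi^* u^2,
\end{equation*}
so the anomaly equation \eqref{22} is trivially satisfied with $n=0$, placing us squarely in case (i) of Theorem \ref{37}. Applying that case gives the rigidity of $\mathcal{T}\otimes \Theta_2(TM|TM)\otimes (E_\mathbb{C},g)$ and $\mathcal{T}\otimes \Theta_3(TM|TM)\otimes (E_\mathbb{C},g)$, which under the stated convention is exactly the assertion of the corollary.

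There is no real obstacle in the corollary itself; the substantive work has already been carried out in Theorem \ref{37}, whose proof performs an induction on the $q^{j/2}$-coefficients of $\Theta_2(TM|V) \otimes Q_2(E)$ and uses Lemma \ref{61} together with Proposition \ref{60} to propagate the cocycle conditions $g_*=1$ and $c_3 = 0$ to each summand. The only content here is the observation that the anomaly constraint becomes vacuous when $V$ coincides with $TM$, so none of the delicate Pontryagin-class bookkeeping needs to be invoked in the final step.
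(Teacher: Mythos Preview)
Your argument is correct and matches the paper's own reasoning exactly: the paper introduces the corollary with the single phrase ``Putting $V=TM$ in Theorem \ref{37}'' and gives no further proof, so the substitution you spell out (together with the trivial verification that $p_1(TM)_{S^1}-p_1(TM)_{S^1}=0$ forces $n=0$ in \eqref{22}) is precisely what is intended.
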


\begin{proof}[Proof of Theorem \ref{main}]
Since $M$ is simply connected, $g_*$ is automatically trivial.
Also $c_3(M, [g])\in H^3(M, \mathbb{R})$ is zero. Therefore the conditions of Corollary \ref{V=TM} is verified.
\end{proof}

\begin{cor}\label{V=0}
Assume $M$ is connected and the circle action is nontrivial.
If $g_*=1$, $c_3(M, [g])=0$ and $p_1(TM)_{S^1}=-n\cdot \pi^*u^2$
for some integer $n$, then the equivariant index of the Toeplitz-Witten operator
\begin{equation*}
\mathcal{T}\otimes \bigotimes_{n=1}^\infty S_{q^n}(T_\mathbb{C}M)\otimes (E_\mathbb{C},g)
\end{equation*}
is identically zero.
\end{cor}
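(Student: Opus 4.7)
The plan is to apply Theorem~\ref{37} with the zero bundle $V=0$. Under this substitution $\Theta_2(TM|V)$ reduces to $\bigotimes_{m=1}^{\infty} S_{q^m}(T_{\CC}M)$ (I rename the dummy summation index to avoid collision with the integer $n$ appearing in the hypothesis), and the anomaly condition~\eqref{22} becomes $-p_1(TM)_{S^1}=n\pi^*u^2$, matching exactly our hypothesis. Hence the operator in the statement coincides with $\mathcal{T}\otimes\Theta_2(TM|0)\otimes(E_{\CC},g)$, and the task reduces to landing in the $n<0$ branch of Theorem~\ref{37}(ii), which would yield the claimed identical vanishing.

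To that end, I would split into two cases according to $M^{S^1}$. If $M^{S^1}=\emptyset$, then the localization formula in Proposition~\ref{fixed} has an empty right-hand side for every topological generator of $S^1$, so $\ind(h,\cdot)=0$ on a dense subset of $S^1$; since the equivariant index is the continuous character $\tr\big[h\big|_{\ker}\big]-\tr\big[h\big|_{\cok}\big]$ of finite-dimensional $S^1$-representations, it vanishes identically on $S^1$ and no further input is required.

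When $M^{S^1}\neq\emptyset$, pick any connected component $M^{S^1}_\alpha$ and restrict $p_1(TM)_{S^1}=-n\pi^*u^2$ to it, noting that $\pi^*u$ restricts to the standard generator $u$ of $H^*(BS^1)$ over the fixed locus. Using the equivariant splitting~\eqref{25} and the standard formula for the equivariant Pontryagin class of each weight-$\gamma$ summand $N_\gamma^\alpha$, a short Chern--Weil calculation lets me read off the numerical identity $n=-\sum_\gamma \gamma^2\dim_{\CC}N_\gamma^\alpha$ by matching the pure $u^2$-coefficients. Connectedness of $M$ and non-triviality of the $S^1$-action force $M^{S^1}_\alpha$ to be a proper submanifold, so its normal bundle is non-zero; hence at least one $\dim_{\CC}N_\gamma^\alpha$ with $\gamma\neq 0$ is positive, and consequently $n<0$. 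Theorem~\ref{37}(ii) then delivers the claimed vanishing.

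The only obstacle I anticipate is bookkeeping in the Chern--Weil step: the paper's convention in~\eqref{25} sums $\gamma$ over $\mathbb{Z}\setminus\{0\}$ while declaring $N=\bigoplus_{\gamma\neq 0}N_\gamma$ to be a complex bundle, so one must be careful not to double-count the pair $(N_\gamma,N_{-\gamma})$ in the computation. Either convention leads to the same outcome on the sign of $n$, however, because the $u^2$-coefficient is a sum of non-negative weight-squared contributions that is strictly positive exactly when the $S^1$-action is non-trivial near the chosen fixed component, which is precisely the mechanism that puts us into the vanishing branch of Theorem~\ref{37}.
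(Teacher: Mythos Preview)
Your proof is correct and follows essentially the same approach as the paper: set $V=0$, read off $n=-\sum_\gamma \gamma^2\dim_{\CC}N_\gamma$ from the equivariant Pontryagin condition (the paper cites this as the third line of \eqref{16} rather than redoing the Chern--Weil step), invoke Proposition~\ref{fixed} when the fixed set is empty, and apply Theorem~\ref{37}(ii) when $n<0$. The only cosmetic difference is that you organize the case split by whether $M^{S^1}$ is empty, whereas the paper splits on the sign of $n$ and observes that $n=0$ forces the fixed set to be empty; the content is identical.
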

\begin{proof}
Taking $V=0$ in the third equality in \eqref{16}, we see that
\begin{equation*}
-\sum_\gamma\sum_j\gamma^2=n,
\end{equation*}
from which we know the $n>0$ case can never happen. If $n=0$, then all the numbers $\dim N_\gamma$ are zero, so that
the fixed point set of the circle action is empty. From Proposition \ref{fixed}, we know
$\mathcal{T}\otimes \bigotimes_{n=1}^\infty S_{q^n}(T_\mathbb{C}M)\otimes (E_\mathbb{C},g)$ has vanishing equivariant index.
For $n<0$, we may take $V=0$ in Theorem \ref{37} to derive the result.
\end{proof}

\subsection{A proof of Theorem \ref{24} }

We continue in the notations of the previous subsection.

Similarly to \eqref{25}, let
\begin{equation}
V\big|_{M^{S^1}}=\bigoplus_{\nu\neq0} V_\nu \oplus V_0^{\mathbb{R}}
\end{equation}
be the $S^1$-equivariant decomposition of the restrictions of $V$ over $M^{S^1}$, where
$V_\nu$ is a complex vector bundle such that $h\in S^1$ acts on it by $h^\nu$ with
$\nu\in \mathbb{Z}\backslash\{0\}$, and $V_0^{\mathbb{R}}$
is the real subbundle of $V\big|_{M^{S^1}}$ such that $S^1$ acts as identity. Set $V_0=V_0^{\mathbb{R}}\otimes\mathbb{C}$.
We denote by
\begin{equation*}
2\pi\sqrt{-1}u_\nu^{\,j},\ j=1,\cdots,\dim V_\nu
\end{equation*}
the Chern roots of $V_\nu$ with $\nu\neq 0$, and by
\begin{equation*}
\pm 2\pi\sqrt{-1}u^{\,j}_0,\ j=1,\cdots, \big[\dim V^\mathbb{R}_0/2\big],
\end{equation*}
the Chern roots of $V_0$.
Here we use the notation that
for $s\in\mathbb{R}$, $[s]$ denotes the greatest integer which is
less than or equal to $s$.

We use the virtual version of the operators in \eqref{38}-\eqref{40}. Set
\begin{align}
\Theta_{1}(TM|V)_v&=\bigotimes_{n=1}^\infty S_{q^n}(\widetilde{T_\mathbb{C}M})\otimes
\bigotimes_{n=1}^{\infty}\Lambda_{q^n}(\widetilde{V_\mathbb{C}}),
\\
\Theta_{2}(TM|V)_v&=\bigotimes_{n=1}^\infty S_{q^n}(\widetilde{T_\mathbb{C}M})\otimes
\bigotimes_{n=1}^{\infty}\Lambda_{-q^{n-{1/2}}}(\widetilde{V_\mathbb{C}}),
\\
\Theta_{3}(TM|V)_v&=\bigotimes_{n=1}^\infty S_{q^n}(\widetilde{T_\mathbb{C}M})\otimes
\bigotimes_{n=1}^{\infty}\Lambda_{q^{n-{1/2}}}(\widetilde{V_\mathbb{C}}).
\end{align}
If $V$ is even dimensional, set
\begin{equation}
\Theta(TM|V)_v=\bigotimes_{n=1}^\infty S_{q^n}(\widetilde{T_\mathbb{C}M})\otimes
\bigotimes_{n=1}^{\infty}\Lambda_{-q^n}(\widetilde{V_\mathbb{C}}).
\end{equation}

We keep the notation explained above Proposition \ref{fixed}, and define some functions on $\mathbb{C}\times \mathbb{H}$,
\begin{align}
F^V_{{\rm L}}(t,\tau)&=-2^{[\dim V/2]}\Big(\frac{-\sqrt{-1}}{2\pi}\Big)^{\dim N}\Big\langle\ch\big(Q_1(E)_v,g^{Q_1(E)_v},d,\tau\big)
\notag
\\
&\hspace{25pt}\cdot\Big(y\frac{\theta'(0,\tau)}{\theta(y,\tau)}\Big)(TM^{S^1})
\cdot\prod_\gamma\Big(\frac{\theta'(0,\tau)}{\theta(x_\gamma+\gamma\,t,\tau)}\Big)(N_\gamma)
\notag
\\
&\hspace{50pt}\cdot\prod_\nu\Big(\frac{\theta_1(u_\nu+\nu\,t,\tau)}{\theta_1(0,\tau)}\Big)(V_\nu),\big[M^{S^1}\big]\Big\rangle,\label{12}
\end{align}
\vspace{-10pt}
\begin{align}
F^V_{{\rm W}}(t,\tau)&=-\Big(\frac{-\sqrt{-1}}{2\pi}\Big)^{\dim N}\Big\langle\ch\big(Q_2(E)_v,g^{Q_2(E)_v},d,\tau\big)
\notag
\\
&\hspace{25pt}\cdot\Big(y\frac{\theta'(0,\tau)}{\theta(y,\tau)}\Big)(TM^{S^1})\notag
\cdot\prod_\gamma\Big(\frac{\theta'(0,\tau)}{\theta(x_\gamma+\gamma\,t,\tau)}\Big)(N_\gamma)
\notag
\\
&\hspace{50pt}\cdot\prod_\nu\Big(\frac{\theta_2(u_\nu+\nu\,t,\tau)}{\theta_2(0,\tau)}\Big)(V_\nu),\big[M^{S^1}\big]\Big\rangle,\label{13}
\end{align}
\vspace{-10pt}
\begin{align}
{F'}^V_{{\rm W}}(t,\tau)&=-\Big(\frac{-\sqrt{-1}}{2\pi}\Big)^{\dim N}\Big\langle\ch\big(Q_3(E)_v,g^{Q_3(E)_v},d,\tau\big)
\notag
\\
&\hspace{25pt}\cdot\Big(y\frac{\theta'(0,\tau)}{\theta(y,\tau)}\Big)(TM^{S^1})
\cdot\prod_\gamma\Big(\frac{\theta'(0,\tau)}{\theta(x_\gamma+\gamma\,t,\tau)}\Big)(N_\gamma)
\notag
\\
&\hspace{6em}
\cdot\prod_v\Big(\frac{\theta_3(u_\nu+\nu\,t,\tau)}{\theta_3(0,\tau)}\Big)(V_\nu),\big[M^{S^1}\big]\Big\rangle,\label{14}
\end{align}
\vspace{-10pt}
\begin{align}
{F}^V_{{\rm dR}\hspace{0.2pt},\hspace{0.3pt}j}(t,\tau)
&=-\,\frac{(-\sqrt{-1})^{\dim N+\,\dim V/2}}{(2\pi)^{\dim N-\,\dim V/2}}
\,\Big\langle\ch\big(Q_j(E)_v,g^{Q_j(E)_v},d,\tau\big)
\notag\\
&\hspace{20pt}\cdot\Big(y\frac{\theta'(0,\tau)}{\theta(y,\tau)}\Big)(TM^{S^1})
\prod_\gamma\Big(\frac{\theta'(0,\tau)}{\theta(x_\gamma+\gamma\,t,\tau)}\Big)(N_\gamma)
\notag\\
&\hspace{40pt}\cdot\prod_\nu\Big(\frac{\theta(u_\nu+\nu\,t,\tau)}{\theta'(0,\tau)}\Big)(V_v),\big[M^{S^1}\big]\Big\rangle.
\quad j=1,2,3.\label{27}
\end{align}

By Proposition \ref{fixed}, we get, for $t\in [0,1]\backslash\mathbb{Q}$ and $h=e^{2\pi\sqrt{-1}\,t}$,
\begin{equation}
F^V_{{\rm L}}(t,\tau)=\ind\Big(h,\mathcal{T}\otimes\Delta(V)\otimes \Theta_1(TM|V)_v\otimes (Q_1(E)_v,g^{Q_1(E)_v})\Big),
\end{equation}
\begin{equation}
F^V_{{\rm W}}(t,\tau)=\ind\Big(h,\mathcal{T}\otimes \Theta_2(TM|V)_v\otimes (Q_2(E)_v,g^{Q_2(E)_v})\Big),
\end{equation}
\begin{equation}
{F'}^V_{{\rm W}}(t,\tau)=\ind\Big(h,\mathcal{T}\otimes \Theta_3(TM|V)_v\otimes (Q_3(E)_v,g^{Q_3(E)_v})\Big),
\end{equation}
\begin{equation}
\begin{split}
{F}^V_{{\rm dR}\hspace{0.2pt},\hspace{0.3pt}j}(t,\tau)&=\ind\Big(h,\mathcal{T}\otimes (\Delta_+(V)-\Delta_-(V))
\\
&\hspace{3em}\otimes \Theta(TM|V)_v\otimes (Q_j(E)_v,g^{Q_j(E)_v})\Big),\quad j=1,2,3.
\end{split}
\end{equation}

Recall that a (meromorphic) Jacobi form of index $m$ and weight $l$ over $L\rtimes\Gamma$, where
$L$ is an integral lattice in the complex plane $\mathbb{C}$ preserved by the modular
subgroup $\Gamma\subset SL_2(\mathbb{Z})$, is a (meromorphic)
function $F(t,\tau)$ over $\mathbb{C}\times\mathbb{H}$ such that
\begin{equation}
\begin{split}\label{42}
F\Big(\frac{t}{c\tau+d},\frac{a\tau+b}{c\tau+d}\Big)
&=(c\tau+d)^le^{2\pi\sqrt{-1}m(ct^2/(c\tau+d))}F(t,\tau),
\\
F(t+\lambda\tau+\mu,\tau)&=e^{-2\pi\sqrt{-1}m(\lambda^2\tau+2\lambda t)}F(t,\tau),
\end{split}
\end{equation}
where $(\lambda,\mu)\in L$, and
$\begin{pmatrix}
a & b
\\
c & d
\end{pmatrix}
\in \Gamma.$
If $F$ is holomorphic over $\mathbb{C}\times\mathbb{H}$, we say that $F$ is a holomorphic Jacobi form.

\

The following theorem can be thought of as an odd analogue of \cite[Theorem 3]{MR1331972} (compare with \cite[Theorem 3.1]{MR2532715}).

\begin{thm}\label{30}
Assume \eqref{22} holds.
\begin{enumerate}[{\rm (i)}]
\item $F^V_{{\rm L}}(t,\tau)$, $F^V_{{\rm W}}(t,\tau)$ and ${F'}^V_{{\rm W}}(t,\tau)$ are holomorphic Jacobi
forms of index ${n}/{2}$ and weight $(\dim M+1)/2$ over $(2\mathbb{Z})^2\rtimes\Gamma$ with
$\Gamma$ equal to $\Gamma_0(2)$, $\Gamma^0(2)$ and $\Gamma_\theta$, respectively.

\item ${F}^V_{{\rm dR},1}(t,\tau)$, ${F}^V_{{\rm dR},2}(t,\tau)$ and ${F}^V_{{\rm dR},3}(t,\tau)$ are holomorphic Jacobi
forms of index ${n}/{2}$ and weight $(\dim M-\dim V+1)/2$ over $(2\mathbb{Z})^2\rtimes\Gamma$ with
$\Gamma$ equal to $\Gamma_0(2)$, $\Gamma^0(2)$ and $\Gamma_\theta$, respectively.
\end{enumerate}
\end{thm}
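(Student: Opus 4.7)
The proof adapts Liu's modular approach (\cite{MR1331972}, cf.\ \cite{MR2495834}) to the odd-dimensional setting, with the modularity of the odd Chern character from Proposition \ref{modular} replacing its even-dimensional analogue. The plan is to verify directly, from the explicit formulas \eqref{12}--\eqref{27}, the two defining transformations of a Jacobi form, and then to invoke Proposition \ref{fixed} to rule out spurious poles.

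For the lattice quasi-periodicity under $t \mapsto t + \lambda\tau + \mu$ with $(\lambda,\mu) \in (2\mathbb{Z})^2$, apply the classical translation formulas for $\theta, \theta_1, \theta_2, \theta_3$ to each $t$-dependent factor $\theta'(0,\tau)/\theta(x_\gamma + \gamma t, \tau)$ and $\theta_j(u_\nu + \nu t, \tau)/\theta_j(0,\tau)$. The resulting exponential prefactors collect into $\exp(-\pi\sqrt{-1}(\lambda^2\tau + 2\lambda t)\cdot\Sigma)$, where $\Sigma$ is a linear combination of $\sum_\gamma \gamma^2 \dim N_\gamma$ and $\sum_\nu \nu^2 \dim V_\nu$. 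Restricting the equivariant anomaly relation $p_1(V)_{S^1} - p_1(TM)_{S^1} = n\pi^*u^2$ to $M^{S^1}$ and expanding in Chern roots as in \cite[Section 3]{MR1331972} identifies $\Sigma$ with $n$, yielding exactly the Jacobi quasi-periodicity of index $n/2$.

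For the modular transformation, by the generator lists for $\Gamma_0(2)$, $\Gamma^0(2)$, $\Gamma_\theta$ recalled in the proof of Proposition \ref{modular} it suffices to check the $T$- and $S$-actions. The $T$-action is routine, relying on \eqref{1} for the Chern character piece and on the corresponding elementary identities for $\theta, \theta_j$. The $S$-action is the main obstacle: apply $\theta(t/\tau, -1/\tau) = (-\sqrt{-1})^{-1/2}\tau^{1/2}e^{\pi\sqrt{-1}t^2/\tau}\theta(t,\tau)$ and its analogues for $\theta_j$, together with Proposition \ref{modular} for the factor $\ch(Q_j(E)_v, g^{Q_j(E)_v}, d, \tau)$. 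The powers of $\tau$ generated by all factors sum to the asserted weight by a dimension count matching $\dim TM^{S^1} + 2\sum_\gamma \dim N_\gamma$ against $\dim M$ (and subtracting the $V_\nu$ contributions for $F^V_{\rm dR, j}$), while the quadratic exponentials $e^{\pi\sqrt{-1}t^2/\tau}$ recombine, via the anomaly relation once more, into the Jacobi factor $e^{\pi\sqrt{-1}nt^2/\tau}$ of index $n/2$. The correction term $-\tau\sqrt{-1}c_3(E_\CC, g, d)/(24\pi)$ appearing in \eqref{64}--\eqref{2} disappears upon integration: the assumption $c_3(M,[g]) = 0$ forces $c_3(E_\CC, g, d)$ to be exact on $M$, and its wedge with the remaining closed characteristic forms pairs trivially against $[M^{S^1}]$.

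Finally, holomorphy on $\mathbb{C} \times \mathbb{H}$ follows from the index-theoretic interpretation. At each order in the $q^{1/2}$-expansion the corresponding coefficient of $F$ is, by Proposition \ref{fixed}, equal on the dense set of topological generators of $S^1$ to the appropriate equivariant index, which is the character of a finite-dimensional $S^1$-representation and hence a finite Laurent polynomial in $h = e^{2\pi\sqrt{-1}t}$. Two meromorphic functions that agree on a set with an accumulation point must coincide, so the apparent poles of \eqref{12}--\eqref{27} at $\gamma t \in \mathbb{Z} + \mathbb{Z}\tau$ cancel order-by-order; uniform convergence of the $q^{1/2}$-series on compacta then assembles these into a genuine holomorphic Jacobi form of the stated weight and index.
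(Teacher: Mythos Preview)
Your treatment of the lattice quasi-periodicity and the modular transformation law is essentially the paper's argument: the translation identities for the theta functions together with the three relations extracted from the equivariant anomaly equation give the index $n/2$, and the $S,T$ actions combined with Proposition~\ref{modular} give the weight. (Note, incidentally, that Theorem~\ref{30} as stated does not assume $c_3(M,[g])=0$; you invoke that hypothesis to kill the degree-$3$ correction in \eqref{64}--\eqref{2}, which is fine for the application to Theorem~\ref{24} but is not how the paper phrases Lemma~\ref{29}.)

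The genuine gap is in your holomorphy argument. Showing that each $q^{1/2}$-coefficient agrees with an equivariant index, hence is a Laurent polynomial in $h=e^{2\pi\sqrt{-1}t}$, only proves that $F$ is holomorphic for $t$ in the strip where the $q^{1/2}$-expansion of the theta quotients actually converges---roughly $|\mathrm{Im}\,t|$ small relative to $\mathrm{Im}\,\tau$. The assertion of ``uniform convergence of the $q^{1/2}$-series on compacta'' of $\mathbb{C}\times\mathbb{H}$ is neither proved nor true in general: the degrees of the Laurent polynomials grow with the $q$-level, so for $t$ away from the real axis the series need not converge. More to the point, the apparent poles at $\gamma t\in\mathbb{Z}+\mathbb{Z}\tau$ with nonzero $\tau$-part are \emph{invisible at every finite order in $q$}, so ``order-by-order cancellation'' says nothing about them.

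The paper closes this gap with Liu's key trick, which you omit. Your argument is exactly Lemma~\ref{32}: $F$ and each of its $\SL_2(\mathbb{Z})$-transforms are holomorphic for $t\in\mathbb{R}$. One then observes that any candidate pole of the meromorphic theta expression lies at $t=k(c\tau+d)/r$ with $(c,d)=1$; choosing $a,b$ with $ad-bc=1$ and acting by $\bigl(\begin{smallmatrix}d&-b\\-c&a\end{smallmatrix}\bigr)\in\SL_2(\mathbb{Z})$ carries that pole to the real point $t=k/r$, contradicting Lemma~\ref{32} applied to the transformed function. It is this modular-transformation step---using the already-established functional equation to transport complex poles to the real line---that upgrades holomorphy from $\mathbb{R}\times\mathbb{H}$ to $\mathbb{C}\times\mathbb{H}$, and it is the missing ingredient in your proposal.
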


\

The remaining part is devoted to a proof of Theorem \ref{30}.

First, the condition \eqref{22} implies that
\begin{equation}
\sum_{\nu}\sum_{j}(u_\nu^j+\nu\,t)^2-\sum_{j}y_j^2-\sum_\gamma\sum_j(x_\gamma^j+\gamma\,t)^2=n\,t^2,
\end{equation}
which gives the equalities
\begin{equation}
\begin{split}\label{16}
&\sum_{\nu}\sum_{j}(u_\nu^j)^2-\sum_{j}y_j^2-\sum_\gamma\sum_j(x_\gamma^j)^2=0,
\\
&\sum_{\nu}\sum_{j}\nu u_\nu^j-\sum_\gamma\sum_j\gamma\,x_\gamma^j=0,
\quad
\sum_{\nu}\sum_{j}\nu^2-\sum_\gamma\sum_j\gamma^2=n.
\end{split}
\end{equation}
Under the action $t\longmapsto t+\lambda\tau+\mu$ with $\lambda,\mu \in 2\mathbb{Z}$, we have
\begin{equation}
\begin{split}\label{41}
\theta(x_\gamma+\gamma(t+\lambda\tau+\mu))
&=e^{-\pi \sqrt{-1}(\gamma^2(\lambda^2\tau+2\lambda t)+2\gamma \lambda x_\gamma)}\theta(x+\gamma\,t,\tau),
\\
\theta_1(u_\nu+\nu(t+\lambda\tau+\mu))
&=e^{-\pi \sqrt{-1}(\nu^2(\lambda^2\tau+2\lambda t)+2 \nu\lambda u_\nu)}\theta_1(u_\nu+\nu\,t,\tau).
\end{split}
\end{equation}
From \eqref{16} and \eqref{41}, we see that $F^V_{{\rm L}}(t,\tau)$ verifies the second line of \eqref{42} with $m=\frac{n}{2}$.
In a very similar way, we can show that for
\begin{equation*}
F\in \{F^V_{{\rm W}},{F'}^V_{{\rm W}},{F}^V_{{\rm dR},1},{F}^V_{{\rm dR},2},{F}^V_{{\rm dR},3}\},
\end{equation*}
$F$ also verifies the second line of \eqref{42} with index $m=\frac{n}{2}$.

Similar to \cite[Lemma 3.2]{MR1331972}, we have the following transformation formulas
(compare with \cite[Lemma 3.2]{MR2532715}).

\begin{lem}\label{29}
\begin{enumerate}[{\rm (i)}]
\item The following equalities hold,
\begin{equation}\label{21}
F^V_{{\rm L}}(t,\tau+1)=F^V_{{\rm L}}(t,\tau), \ F^V_{{\rm W}}(t,\tau+1)={F'}^V_{{\rm W}}(t,\tau),
\end{equation}
\begin{equation}\label{26}
{F}^V_{{\rm dR},1}(t,\tau+1)={F}^V_{{\rm dR},1}(t,\tau),\ {F}^V_{{\rm dR},2}(t,\tau+1)={F}^V_{{\rm dR},3}(t,\tau).
\end{equation}

\item Assume \eqref{22} holds. Then
\begin{equation}
\begin{split}
F^V_{{\rm L}}\Big(\frac{t}{\tau},-\frac{1}{\tau}\Big)
&=2^{[(N+\dim V)/2]}\tau^{(\dim M +1)/2}e^{\pi\sqrt{-1}nt^2/\tau}F^V_{{\rm W}}(t,\tau),\label{18}
\\
{F'}^V_{{\rm W}}\Big(\frac{t}{\tau},-\frac{1}{\tau}\Big)
&=\tau^{(\dim M +1)/2}e^{\pi\sqrt{-1}nt^2/\tau}{F'}^V_{{\rm W}}(t,\tau),
\end{split}
\end{equation}
\begin{equation}
\begin{split}
{F}^V_{{\rm dR},1}\Big(\frac{t}{\tau},-\frac{1}{\tau}\Big)
&=2^{N/2}\tau^{(\dim M -\dim V+1)/2}e^{\pi\sqrt{-1}nt^2/\tau}{F}^V_{{\rm dR},2}(t,\tau),\label{28}
\\
{F}^V_{{\rm dR},3}\Big(\frac{t}{\tau},-\frac{1}{\tau}\Big)
&=\tau^{(\dim M -\dim V+1)/2}e^{\pi\sqrt{-1}nt^2/\tau}{F}^V_{{\rm dR},3}(t,\tau).
\end{split}
\end{equation}
\end{enumerate}
\end{lem}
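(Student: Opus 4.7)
The plan is to prove both transformation formulas by substituting the classical modular transformations of the Jacobi theta functions together with the already established transformation rules \eqref{1}, \eqref{64}, \eqref{2} for the odd Chern character forms $\ch(Q_j(E)_v, g^{Q_j(E)_v}, d, \tau)$. The anomaly hypothesis \eqref{22} enters only at the end through its Chern-root consequences \eqref{16}, which produce the exponential factor $e^{\pi\sqrt{-1}\,nt^2/\tau}$.

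Part (i) is the easier half. Using $\theta(v,\tau+1) = e^{\pi\sqrt{-1}/4}\theta(v,\tau)$, $\theta_1(v,\tau+1) = e^{\pi\sqrt{-1}/4}\theta_1(v,\tau)$, $\theta_2(v,\tau+1) = \theta_3(v,\tau)$ and $\theta_3(v,\tau+1) = \theta_2(v,\tau)$ (whence $\theta'(0,\tau+1) = e^{\pi\sqrt{-1}/4}\theta'(0,\tau)$), one observes that every theta expression appearing in \eqref{12}--\eqref{27} is a ratio of theta functions of the same kind, so all $e^{\pi\sqrt{-1}/4}$ phases cancel pairwise. Combined with $\ch(Q_1)(\tau+1) = \ch(Q_1)(\tau)$ and $\ch(Q_2)(\tau+1) = \ch(Q_3)(\tau)$ from \eqref{1}, this immediately yields \eqref{21} and \eqref{26}.

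For part (ii), substitute the classical inversion formulas $\theta(v/\tau,-1/\tau) = -\sqrt{-1}(-\sqrt{-1}\tau)^{1/2}e^{\pi\sqrt{-1}v^2/\tau}\theta(v,\tau)$, $\theta_1(v/\tau,-1/\tau) = (-\sqrt{-1}\tau)^{1/2}e^{\pi\sqrt{-1}v^2/\tau}\theta_2(v,\tau)$, $\theta_2(v/\tau,-1/\tau) = (-\sqrt{-1}\tau)^{1/2}e^{\pi\sqrt{-1}v^2/\tau}\theta_1(v,\tau)$, $\theta_3(v/\tau,-1/\tau) = (-\sqrt{-1}\tau)^{1/2}e^{\pi\sqrt{-1}v^2/\tau}\theta_3(v,\tau)$ into each theta factor in \eqref{12}--\eqref{27}. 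Three things then happen simultaneously. First, the $(-\sqrt{-1}\tau)^{1/2}$ factors (together with the extra $\tau$ coming from $\theta'(0,-1/\tau)$) accumulate into a total $\tau$-power whose exponent equals $(\dim M+1)/2$ (respectively $(\dim M-\dim V+1)/2$ for the de Rham variants), after a direct count of Chern roots. Second, the exponential factors $e^{\pi\sqrt{-1}v^2/\tau}$ collected over all Chern roots reduce, via the three identities in \eqref{16}, to the single clean factor $e^{\pi\sqrt{-1}\,nt^2/\tau}$: the first identity annihilates the quadratic pieces, the second kills the cross terms linear in $t$, and the third produces the surviving $nt^2$. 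Third, the swap $\theta_1\leftrightarrow\theta_2$ (respectively self-duality of $\theta_3$) in the $V_\nu$-product is matched by the swap $\ch(Q_1)\leftrightarrow\ch(Q_2)$ (respectively self-map of $\ch(Q_3)$) supplied by \eqref{64} and \eqref{2}; the factor $2^{N/2}$ from \eqref{64} combines with the prefactor $2^{[\dim V/2]}$ already in \eqref{12} to give $2^{[(N+\dim V)/2]}$.

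The main technical obstacle is twofold. First is the careful bookkeeping of the $\sqrt{-\sqrt{-1}\tau}$ phase choices and of the theta-factor counts needed to pin down the exact $\tau$-weight. Second, and more delicate, is the handling of the $c_3(E_\mathbb{C}, g, d)$ correction term in \eqref{64} and \eqref{2}: it contributes a summand $-\frac{\tau\sqrt{-1}}{24\pi}c_3(E_\mathbb{C},g,d)$ only in form-degree $3$ and is multiplied by $\tau$ rather than $\tau^{2i}$, so it would a priori spoil the Jacobi transformation law. Either one must argue that this correction cancels in the particular pairings \eqref{12}--\eqref{27} with $[M^{S^1}]$, or one must invoke the hypothesis $c_3(M,[g])=0$ of Theorem \ref{24}, which is the ultimate application of Lemma \ref{29}. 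The remaining cases $F'^V_{\rm W}$ and $F^V_{{\rm dR},j}$ follow by parallel substitutions using the appropriate line of \eqref{1}--\eqref{2}.
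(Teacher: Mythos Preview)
Your proposal is correct and follows essentially the same route as the paper: substitute the classical theta transformations (the paper records these as \eqref{17}, quoting \cite{MR2495834}) together with the odd Chern character transformations \eqref{1}, \eqref{64}, \eqref{2}, and use the Chern-root identities \eqref{16} coming from the anomaly hypothesis to collapse the exponential factors to $e^{\pi\sqrt{-1}nt^2/\tau}$. The paper's proof is terser but identical in substance.

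Your flagging of the $c_3(E_\mathbb{C},g,d)$ correction term in \eqref{64} and \eqref{2} is well taken and in fact sharper than the paper itself. As stated, Lemma~\ref{29}(ii) assumes only \eqref{22}, yet the proof invokes \eqref{64}, whose degree-$3$ component carries the extra summand $-\tfrac{\tau\sqrt{-1}}{24\pi}c_3$ with the \emph{wrong} power of $\tau$. The paper does not explain why this term drops out, and indeed the clean transformation laws \eqref{18}--\eqref{28} do require $c_3(M,[g])=0$; note that Proposition~\ref{modular} is likewise stated only for $i\geq 2$. Since Lemma~\ref{29} and Theorem~\ref{30} are only ever applied inside the proof of Theorem~\ref{24}, where $c_3(M,[g])=0$ is an explicit hypothesis, the argument is ultimately sound---but you are right that this hypothesis is tacitly needed in the lemma, and your second alternative (invoke $c_3(M,[g])=0$) is the correct resolution.
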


\begin{proof}
\eqref{21} and \eqref{26} follow from \cite[(3.28)-(3.31), (4.6)]{MR2495834}, \eqref{1} and \eqref{12}-\eqref{27} straightforwardly.

From \cite[(3.28)-(3.31), (4.7)]{MR2495834}, we obtain the following transformation formulas,
\begin{equation}
\begin{split}\label{17}
y\frac{\theta'(0,-1/\tau)}{\theta(y,-1/\tau)}
&=e^{-\pi\sqrt{-1}\tau y^2}\tau y\frac{\theta'(0,\tau)}{\theta(\tau y,\tau)},
\\
\frac{\theta'(0,-1/\tau)}{\theta(x_\gamma+\gamma\,t/\tau,-1/\tau)}
&=e^{-\pi\sqrt{-1}\tau(x_\gamma+\gamma\,t/\tau)^2}\,
\frac{\tau\theta'(0,\tau)}{\theta(\tau x_\gamma+\gamma\,t,\tau)}\ ,
\\
\frac{\theta_1(u_\nu+\nu\,t/\tau,-1/\tau)}{\theta_1(0,-1/\tau)}
&=e^{\pi\sqrt{-1}\tau(u_\nu+\nu\,t/\tau)^2}\,
\frac{\theta_2(\tau u_\nu+\nu\,t,\tau)}{\theta_2(0,\tau)}\ .
\end{split}
\end{equation}

Combining \eqref{64}, \eqref{12}, \eqref{16} and \eqref{17}, we obtain the first line in \eqref{18}.
The other lines in \eqref{18} and \eqref{28} can be verified in a similar way.
\end{proof}

Since the generators of $\Gamma_0(2)$ are $T$, $ST^2ST$, from Lemma \ref{29}, we can check directly that
the first line of \eqref{42} holds for $F^V_{{\rm L}}(t,\tau)$ with
$\Gamma=\Gamma_0(2)$, $m=\frac{n}{2}$, $l=\frac{\dim M+1}{2}$.
Thus, $F^V_{{\rm L}}(t,\tau)$ is a meromorphic Jacobi form
of index $\frac{n}{2}$ and weight $\frac{\dim M+1}{2}$ over $(2\mathbb{Z})^2\rtimes\Gamma_0(2)$.
In a similar way, we can prove the assertions in Theorem \ref{30} except the holomorphic property.

For $\mathscr{G}=
\begin{pmatrix}
a & b
\\
c & d
\end{pmatrix}
\in \SL_2(\mathbb{Z})$ and a Jacobi form $F$ of index $m$ and weight $l$, we write
\begin{equation}\label{31}
F\big(\mathscr{G}(t,\tau)\big)
=(c\tau+d)^{-l}e^{-2\pi\sqrt{-1}m(ct^2/(c\tau+d))}F\Big(\frac{t}{c\tau+d},\frac{a\tau+b}{c\tau+d}\Big).
\end{equation}

Lemma \ref{29} tells us that, if
\begin{equation*}
F\in \{F^V_{{\rm L}},F^V_{{\rm W}},{F'}^V_{{\rm W}}\}\ \big(\text{resp. }
\{{F}^V_{{\rm dR},1},{F}^V_{{\rm dR},2},{F}^V_{{\rm dR},3}\}\big),
\end{equation*}
its modular transformation $F\big(\mathscr{G}(t,\tau)\big)$
which may need a multiplication by a constant is still in
\begin{equation*}
\{F^V_{{\rm L}},F^V_{{\rm W}},{F'}^V_{{\rm W}}\}\ \big(\text{resp. }
\{{F}^V_{{\rm dR},1},{F}^V_{{\rm dR},2},{F}^V_{{\rm dR},3}\}\big).
\end{equation*}

\begin{lem}\label{32}
For any function
\begin{equation*}
F\in \{F^V_{{\rm L}},F^V_{{\rm W}},{F'}^V_{{\rm W}},{F}^V_{{\rm dR},1},{F}^V_{{\rm dR},2},{F}^V_{{\rm dR},3}\},
\end{equation*}
its modular transformation
is holomorphic in $(t,\tau)\in \mathbb{R}\times\mathbb{H}$.
\end{lem}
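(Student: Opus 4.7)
The plan is to reduce the lemma to holomorphicity of each of the six basic functions themselves, and then to rule out the candidate poles on the real axis by invoking the equivariant index interpretation. As observed in the discussion right after Lemma \ref{29}, for every $\mathscr{G}\in\SL_2(\mathbb{Z})$ and every $F$ in the set $\{F^V_{\rm L}, F^V_{\rm W}, {F'}^V_{\rm W}, F^V_{{\rm dR},1}, F^V_{{\rm dR},2}, F^V_{{\rm dR},3}\}$, the modular transform $F(\mathscr{G}(t,\tau))$ defined by \eqref{31} (which already incorporates the Jacobi-form prefactors) equals a nonzero scalar times another member $\tilde F$ of the same set. Therefore it suffices to establish that each function in this set is itself holomorphic on $\mathbb{R}\times\mathbb{H}$.

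Fixing such an $F$, I would first locate the possible poles. Inspecting the explicit formulas \eqref{12}-\eqref{27}, the only factors that can vanish for $\tau\in\mathbb{H}$ are the $\theta(x_\gamma+\gamma t,\tau)$ in the denominators, since $\theta_j(0,\tau)$ and $\theta'(0,\tau)$ are nowhere zero on $\mathbb{H}$. Each Chern root $x_\gamma$ is a positive-degree nilpotent form on $M^{S^1}$, and $\tau\in\mathbb{H}$ has strictly positive imaginary part, so the lattice condition $x_\gamma+\gamma t\in\mathbb{Z}+\tau\mathbb{Z}$ for real $t$ collapses to $\gamma t\in\mathbb{Z}$. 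Hence the candidate poles on $\mathbb{R}\times\mathbb{H}$ lie along the real hyperplanes $\{t=k/\gamma\}\times\mathbb{H}$, with $\gamma$ a nonzero weight appearing in the normal bundle decomposition \eqref{25}.

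To rule out these poles, I would use the equivariant index interpretation. As noted after \eqref{27}, for every topological generator $t$, $F(t,\tau)$ equals the equivariant index at $h=e^{2\pi\sqrt{-1}t}$ of the corresponding twisted Toeplitz operator. Since each $q^{j/2}$-coefficient of $\Theta_i(TM|V)_v\otimes Q_j(E)_v$ is a finite-dimensional $S^1$-equivariant virtual bundle, the $q^{j/2}$-coefficient of this equivariant index is a genuine virtual character, hence a Laurent polynomial in $e^{2\pi\sqrt{-1}t}$ uniformly bounded for $t\in\mathbb{R}$. The product expansions of the theta functions then make the full $q^{1/2}$-series converge absolutely, uniformly on compact subsets of $\mathbb{R}\times\{\tau\}$ for each $\tau\in\mathbb{H}$. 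Thus $F(t,\tau)$ stays bounded on the dense set of topological generators as $t$ approaches any rational $t_0$; a meromorphic function bounded on a dense subset of a real neighborhood of $t_0$ cannot have a pole there, so the candidate poles located in the previous step are spurious, and $F$ is holomorphic on $\mathbb{R}\times\mathbb{H}$.

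The main obstacle is the uniform boundedness/absolute convergence step: one must control the growth, in the $q^{1/2}$-degree, of the virtual characters appearing as coefficients, uniformly as $t$ varies in a compact real set. This is standard given the explicit product structure of $\Theta_i(TM|V)_v$ and $Q_j(E)_v$, but it is the only nontrivial analytic input; once it is in place, the rest is a density-plus-boundedness argument together with the reduction via Lemma \ref{29}.
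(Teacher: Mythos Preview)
Your proposal is correct and follows essentially the same approach as the paper, which simply refers to Liu's proof of \cite[Lemma 2.3]{MR1331972} with Proposition \ref{fixed} substituted for the even-dimensional Lefschetz fixed point formula. You have in fact unpacked that argument: reduce via the permutation property recorded after Lemma \ref{29}, locate the candidate real poles at $\gamma t\in\mathbb{Z}$ from the $\theta$-denominators, and eliminate them by interpreting each $q^{1/2}$-coefficient as a virtual $S^1$-character via the equivariant Toeplitz index, hence bounded on the dense set of topological generators; your identification of the uniform convergence of the $q^{1/2}$-series as the only analytic point requiring care is accurate and matches what Liu's argument needs.
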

\begin{proof}
The proof is almost the same as the proof of \cite[Lemma 2.3]{MR1331972} except that
we use Proposition \ref{fixed} instead of the Lefschetz fixed point formula therein.
\end{proof}

It is crucial that $F$ and its modular transformation are the Lefschetz
numbers of certain Toeplitz operators. This is also the place where the spin
conditions on $M$ and $V$ as well as the assumptions on $g$ come in.
Thus, one can use index theory to cancel part of the poles of these functions.

We now prove that
\begin{equation*}
F\in \{F^V_{{\rm L}},F^V_{{\rm W}},{F'}^V_{{\rm W}},{F}^V_{{\rm dR},1},{F}^V_{{\rm dR},2},{F}^V_{{\rm dR},3}\}
\end{equation*}
is actually holomorphic on $\mathbb{C}\times \mathbb{H}$.

The proof essentially makes use of Liu's key techniques \cite[Lemma 3.4]{MR1331972}.
We give the details here for completeness.
By \eqref{12} and \eqref{27}, we see that the possible poles of $F(t,\tau)$ can be
written in the form $t=k(c\tau+d)/r$ for integers $k$, $r$, $c$, $d$ with $(c,d)=1$.

Suppose $t=k(c\tau+d)/r$ is a pole for $F(t,\tau)$. Find integers $a$, $b$ such that $ad-bc=1$.
Take
$\mathscr{G}=
\begin{pmatrix}
d & -b
\\
-c & a
\end{pmatrix}
\in \SL_2(\mathbb{Z})$.
From \eqref{31}, it is easy to see that $F\big(\mathscr{G}(t,\tau)\big)$
and $F\Big(\frac{t}{-c\tau+a},\frac{d\tau-b}{-c\tau+a}\Big)$ have the same poles.
Now that $t=k(c\tau+d)/r$ is a pole of $F(t,\tau)$, a pole of $F\big(\mathscr{G}(t,\tau)\big)$
is given by solving the equation
\begin{equation}
\frac{t}{-c\tau+a}=\frac{k(c\frac{d\tau-b}{-c\tau+a}+d)}{r},
\end{equation}
which exactly gives $t=k/r\in\mathbb{R}$. By Lemma \ref{32}, we get a contradiction.
Therefore, $F(t,\tau)$ is holomorphic on $\mathbb{C}\times \mathbb{H}$.

We complete the proof of Theorem \ref{30}.


\end{document}